\newcommand{\Aff}{{\mathbb A}}
\newcommand{\C}{{\mathbb C}}
\newcommand{\coeff}{\operatorname{coeff}}
\newcommand{\Diag}{\operatorname{Diag}}
\newcommand{\disc}{\operatorname{disc}}
\newcommand{\eps}{\varepsilon}
\newcommand{\F}{{\mathbb F}}
\newcommand{\Gal}{\operatorname{Gal}}
\def\H{{\mathcal H}}
\def\I{{\mathcal I}}
\def\Im{{\rm Im}}
\newcommand{\isom}{\cong}
\newcommand{\Jac}{\operatorname{Jac}}
\newcommand{\lcm}{\operatorname{lcm}}
\newcommand{\phihat}{{\widehat{\phi}}}
\newcommand{\PP}{{\mathbb P}}
\newcommand{\PSL}{{\operatorname{PSL}}}
\newcommand{\Q}{{\mathbb Q}}
\newcommand{\Qbar}{{\overline{\Q}}}
\newcommand{\ra}{\longrightarrow}
\newcommand{\SL}{\operatorname{SL}}
\newcommand{\tr}{\operatorname{tr}}
\newcommand{\vv}{{\mathbf v}}
\newcommand{\ww}{{\mathbf w}}
\newcommand{\Z}{{\mathbb Z}}
\newtheorem{Proposition}{Proposition}[section]
\newtheorem{Theorem}[Proposition]{Theorem}
\newtheorem{Lemma}[Proposition]{Lemma}
\newtheorem{Conjecture}[Proposition]{Conjecture}
\theoremstyle{definition}
\newtheorem{Definition}[Proposition]{Definition}
\newtheorem{Remark}[Proposition]{Remark}
\newtheorem{Example}[Proposition]{Example}
\begin{document}
\date{3rd June 2021}
\title[On pairs of $17$-congruent elliptic curves]
{On pairs of $17$-congruent elliptic curves}

\author{T.A.~Fisher}
\address{University of Cambridge,
          DPMMS, Centre for Mathematical Sciences,
          Wilberforce Road, Cambridge CB3 0WB, UK}
\email{T.A.Fisher@dpmms.cam.ac.uk}

\renewcommand{\baselinestretch}{1.1}
\renewcommand{\arraystretch}{1.3}

\renewcommand{\theenumi}{\roman{enumi}}

\begin{abstract}
  We compute explicit equations for the surfaces $Z(17,1)$ and
  $Z(17,3)$ parametrising pairs of $17$-congruent elliptic curves. We
  find that each is a double cover of the same elliptic $K3$-surface.
  We use these equations to exhibit the first non-trivial example of a
  pair of symplectically $17$-congruent elliptic curves over the
  rationals. We also compute the corresponding genus $2$ curve whose
  Jacobian has a $(17,17)$-splitting.
\end{abstract}

\maketitle

\section{Introduction}

Let $p$ be a prime number. Elliptic curves over the rationals are said
to be {\em $p$-congruent} if their $p$-torsion subgroups are
isomorphic as Galois modules, and {\em symplectically} $p$-congruent
if the isomorphism can be chosen to respect the Weil pairing.  For
example if $\phi : E \to E'$ is an isogeny of degree $d$, and $d$ is
coprime to $p$, then $E$ and $E'$ are $p$-congruent, and
symplectically $p$-congruent if $d$ is a quadratic residue mod $p$.
Such congruences, arising from an isogeny, are said to be trivial.

Examples of non-trivial symplectic $p$-congruences were previously
known for all primes $p \le 13$. We exhibit the first such example
with $p = 17$. Specifically, the elliptic curves
\begin{align*}
&E_1: & y^2 + x y &= x^3 - x^2 - 128973503459 x + 17827877649739965 \\ 
&E_2: & y^2 + x y &= x^3 - x^2 - 184201215542543714 x - 
          34187608332483214491862380 
\end{align*}
with conductors 
\begin{align*}
N(E_1) &= 279809270 = 2 \cdot 5 \cdot 13 \cdot 59 \cdot 191^2, \\
N(E_2) &= 3077901970 = 2 \cdot 5 \cdot 11 \cdot 13 \cdot 59 \cdot 191^2,
\end{align*}
are symplectically $17$-congruent. This claim may be verified using
either of the techniques we review in Sections~\ref{sec:verify}
and~\ref{sec:red}.

A pair of anti-symplectically $17$-congruent elliptic curves was
previously found by Cremona \cite{Billerey,CF,7and11}. These are the
elliptic curves
\begin{align*} 
&E'_1: & y^2 + x y &= x^3 - 8 x + 27 \\
&E'_2: & y^2 + x y &= x^3 + 8124402 x - 11887136703 
\end{align*}
with conductors
\begin{align*}
N(E'_1) &= 3675 = 3 \cdot 5^2 \cdot 7^2, \\
N(E'_2) &= 47775 = 3 \cdot 5^2 \cdot 7^2 \cdot 13.
\end{align*}

In \cite{congr13} we completed the proof that for all primes $p \le
13$ there are infinitely many non-trivial pairs of $p$-congruent
elliptic curves (with infinitely many pairs of $j$-invariants) both
symplectic and anti-symplectic. The Frey-Mazur conjecture predicts
that for $p$ sufficiently large, there are no such examples.  We
suggest the following strong form of their conjecture.

\begin{Conjecture} 
\label{conj1}
Let $p \ge 17$ be a prime. Then any pair of $p$-congruent elliptic
curves over the rationals is either explained by an isogeny, or the
elliptic curves are simultaneous quadratic twists of one of the pairs
$(E_1,E_2)$ or $(E'_1,E'_2)$.
\end{Conjecture}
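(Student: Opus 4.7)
\bigskip
\noindent\emph{Proof proposal.}
The plan is to translate the conjecture into a statement about rational points on the moduli surfaces $Z(p,r)$ for $p\ge 17$, where $r$ ranges over $\F_p^\times/(\F_p^\times)^2$ and $r=1$ picks out the symplectic congruences. A pair of $p$-congruent elliptic curves, taken up to simultaneous quadratic twist, corresponds to a non-cuspidal rational point on some $Z(p,r)$. The trivial (isogeny-induced) congruences cut out an explicit union of curves in $Z(p,r)$: for each $d$ coprime to $p$, pairs linked by a cyclic $d$-isogeny form a modular curve isomorphic to a quotient of $X_0(d)\times_{X(1)}X_0(d)$. Thus the conjecture reduces to showing, for each prime $p\ge 17$ and each $r$, that every non-cuspidal rational point on $Z(p,r)$ either lies on one of these isogeny curves or (only for $p=17$, $r\in\{1,3\}$) is the single exceptional point coming from $(E_1,E_2)$ or $(E'_1,E'_2)$.

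For $p=17$ I would exploit the equations computed in this paper. Because both $Z(17,1)$ and $Z(17,3)$ are double covers of the same elliptic K3 surface, it suffices to classify rational points on a single K3 surface $S$ together with the fibre over each such point of the two double covers. The attack on $S$ is via its elliptic fibrations: one enumerates all inequivalent elliptic fibrations $S\to\PP^1$ (up to the action of the Mordell--Weil and Neron--Severi lattices), computes the Mordell--Weil rank of the generic fibre, and, for the fibrations of rank $0$, lists the finitely many rational sections and checks each descends to a trivial or known-exceptional point; for fibrations of positive rank, one attempts to prove that the resulting Zariski-dense set of sections only sweeps out the isogeny locus, using a residue-class sieve and Chabauty--Coleman on fibres where the rank drops. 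The modular interpretation of the image of $Z(17,r)\to S$ in terms of isogeny correspondences should give enough constraints to match points to the correct stratum.

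For $p\ge 19$ the surfaces $Z(p,r)$ are of general type (this should be checkable from the genera of $X_0(p)$ and the Kodaira dimensions of $X(p)\times X(p)/\Delta$, mirroring the arguments used for $p=13$). Granting the Bombieri--Lang conjecture, all but finitely many rational points lie on a finite union of rational or elliptic curves in $Z(p,r)$, and one then argues that every such curve is either an isogeny correspondence $X_0(d)\times_{X(1)}X_0(d)$, a Humbert-type modular curve, or a curve whose rational points can be determined explicitly (by $j$-line projection combined with Mazur's isogeny theorem and the Bilu--Parent--Rebolledo results on non-split Cartan curves); the absence of non-isogeny rational points then follows from the list of rational $j$-invariants compatible with a mod-$p$ representation landing in such a subgroup.

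The main obstacle is the K3 case at $p=17$: K3 surfaces routinely have Zariski-dense rational points and there is no unconditional method to certify a complete list, so even with every elliptic fibration in hand one is reduced to showing by ad hoc sieves that the positive-rank fibrations contribute nothing new, which is genuinely delicate. The secondary obstacle, for $p\ge 19$, is the dependence on Bombieri--Lang together with the effective enumeration of all negative-Kodaira subvarieties of $Z(p,r)$; computing defining equations already seems out of reach for $p=19$, so the best one can realistically aim for in the foreseeable future is a conditional proof at $p=17$ and heuristic evidence for $p\ge 19$.
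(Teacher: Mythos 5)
The statement you are asked to prove is labelled a \emph{Conjecture} in the paper, and the paper does not prove it. The author's only evidence is computational: an explicit point search on the birational models $z^2=F_k(T,x,y)$ of $Z(17,1)$ and $Z(17,3)$ up to moderate height (finding nothing beyond the isogeny loci and the two known pairs), together with the Cremona--Freitas verification for all curves of conductor below $500\,000$. Your proposal is therefore correctly framed as a research program rather than a proof, and you are right that no proof is currently available. Your reduction of the conjecture to the determination of rational points on the surfaces $Z(p,r)$, with the isogeny-induced congruences carved out as copies of modular curves (note: the locus of pairs $(E,E/C)$ with $C$ cyclic of order $d$ is simply $X_0(d)$, not a fibre product --- the pair is determined by $E$ and $C$; the fibre product only enters on the quotient $W$), matches the paper's point of view, and your use of the fact that both $Z(17,1)$ and $Z(17,3)$ are double covers of the same elliptic K3 is exactly how the paper organises its search.

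The genuine gaps are the ones you name yourself, and they are fatal to any claim of a proof: for $p=17$ the base is a K3 surface on which rational points are expected to be Zariski dense, so no finite enumeration of elliptic fibrations plus sieving can certify completeness; and for $p\ge 19$ the argument is conditional on Bombieri--Lang plus an effective classification of all low-Kodaira-dimension curves on $Z(p,r)$, for which not even defining equations are available. So your proposal should be read as a plausible strategy for gathering further evidence (the $p=17$ part essentially extends what the paper already does), not as a proof; the paper itself leaves the statement open and only offers the search data of Section 7 and Table 2 in its support.
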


We say that a $p$-congruence has {\em power} $k$ if the isomorphism of
$p$-torsion subgroups raises the Weil pairing to the power $k$. Let
$Z(p,k)$ be the surface parametrising all pairs of elliptic curves
that are $p$-congruent with power $k$, up to simultaneous quadratic
twist. This surface comes with an involution $\iota$ whose moduli
interpretation is that we swap over the two elliptic curves.  We write
$W(p,k)$ for the quotient of $Z(p,k)$ by $\iota$.  These surfaces only
depend (up to isomorphism) on whether $k$ is a quadratic residue or a
quadratic non-residue mod $p$.  As above, we call these the {\em
  symplectic} and {\em anti-symplectic} cases.
 
For $p \le 13$ it is known \cite{moduli, congr13, Kumar} that the
surfaces $W(p,k)$ are rational (i.e., birational to $\PP^2$) over
$\Q$.

\begin{Theorem} 
\label{thm1}
The surfaces $W(17,1)$ and $W(17,3)$ are birational over $\Q$ to the
elliptic K3-surface with Weierstrass equation
\begin{equation}
\label{Weqn0}
y^2 + (T + 1)(T - 2) x y + T^3 y = x^3 - x^2.
\end{equation}
The surfaces $Z(17,1)$ and $Z(17,3)$ are birational over $\Q$ to the
double covers $z^2 = F_1(T,x,y)$ and $z^2 = F_3(T,x,y)$ where $F_1$
and $F_3$ are recorded in Appendix~\ref{app:formulae}.
\end{Theorem}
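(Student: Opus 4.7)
The plan is to exploit the moduli interpretation of $Z(17,k)$ as the surface parametrising triples $(E_1,E_2,\phi)$, where $\phi: E_1[17] \to E_2[17]$ is a Galois-equivariant isomorphism raising the Weil pairing to the $k$-th power, taken up to simultaneous quadratic twist. For fixed $E_1$, the fibre is a twist $Y_{E_1}(17,k)$ of the modular curve $X(17)$, so $Z(17,k)$ can be assembled by letting $E_1$ vary on the $j$-line. To produce explicit equations one needs an explicit model of $X(17)$ together with its universal elliptic curve (up to twist); I would build these using the representation theory of $\PSL_2(\F_{17})$ on modular forms of small weight, following the framework Fisher developed at the prime levels $p=7,11,13$. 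From such a model one reads off rational equations for $Y_{E_1}(17,k)$ in a parameter depending on $E_1$, and eliminating auxiliary variables yields the surface $Z(17,k)$. The involution $\iota$ corresponds to an explicit automorphism of the universal family interchanging the two elliptic curves, and passing to invariants gives $W(17,k)$.

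The main geometric step, and the principal obstacle, is to recognise the common elliptic K3 structure on $W(17,1)$ and $W(17,3)$. Natural candidates for the fibration come from pencils through the diagonal locus of isogenous pairs, or through the cuspidal locus of the underlying level-$17$ moduli problem. Having chosen a pencil, I would compute a generic fibre, normalise it into Weierstrass form, and hunt for a birational change of coordinates matching \eqref{Weqn0}. That the two surfaces $W(17,1)$ and $W(17,3)$ become isomorphic must be verified by performing the two reductions independently and matching the resulting Weierstrass models; one expects a structural explanation via some correspondence between symplectic and anti-symplectic $17$-congruences (a phenomenon not present for smaller $p$, where the surfaces $W(p,k)$ for distinct $k$-classes are already rational and typically distinct).

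Finally, $Z(17,k)$ is recovered as the double cover of $W(17,k)$ branched along the fixed locus of $\iota$, and computing $F_k$ amounts to writing down the defining equation of this branch divisor on the K3 surface \eqref{Weqn0} and extracting a square root, with the extra twist datum encoding the distinction between $k=1$ and $k=3$. As a sanity check one can substitute Cremona's anti-symplectic pair $(E'_1,E'_2)$ into the rational map $Z(17,3) \to W(17,3)$ and verify that $F_3$ specialises to a rational square, and one can search for low-height rational points on $W(17,1)$ --- the new pair $(E_1,E_2)$ announced in the introduction should come from such a point. The dominant practical difficulty throughout is the size of the intermediate polynomials, which will require careful modular reduction and a judicious choice of coordinates to keep the computation tractable.
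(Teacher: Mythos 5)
Your strategy is genuinely different from the paper's, and it runs into concrete obstacles. The paper does \emph{not} build $Z(17,k)$ as a family of twists $Y_{E_1}(17,k)$ over the $j$-line: that is the method Fisher used for $p=7,11$, and it was abandoned for $p\ge 13$ precisely because writing down the twist of $X(p)$ by the mod-$p$ representation of a generic elliptic curve becomes infeasible ($X(17)$ sits in $\PP^7$ or $\PP^8$ with a large ideal of Pfaffian/quartic relations). Instead the paper uses \cite[Lemma 3.2]{congr13}: $Z(17,1)$ is birational to the quotient of $X(17)\times X(17)$ by the \emph{diagonal} action of $G\cong\PSL_2(\Z/17\Z)$ (and $Z(17,3)$ by a diagonal action twisted by an outer automorphism), and the quotient is computed by exhibiting explicit bi-invariants of low bidegree, mapping $X\times X$ into $\PP^{10}$ (resp.\ $\PP^9$), and identifying the image surface with the K3 surface \eqref{Weqn0} by direct coordinate changes. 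There is no structural ``correspondence between symplectic and anti-symplectic congruences'' in the paper --- the coincidence of the two quotients is observed only after independent computation, and the author explicitly states there is no theoretical explanation.

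Two steps of your plan are genuine gaps rather than just a different route. First, the involution $\iota$ swapping $E_1$ and $E_2$ does \emph{not} preserve the fibration of $Z(17,k)$ over the $j$-line of $E_1$ (it sends the fibre over $j(E_1)$ into many different fibres), so ``passing to invariants of an automorphism of the universal family'' is not available in your model; in the paper's model $\iota$ is simply the swap of the two factors of $X\times X$, which acts transparently on bi-invariants, and $W(17,k)$ is cut out by the symmetric ones. Second, you give no mechanism for proving that the map you eventually construct onto the K3 surface (or its double cover) is \emph{birational} rather than merely dominant of some degree. The paper needs a real argument here: the function-field extension $\Q(X\times X)/\Q(\widetilde{\Sigma}\,)$ is Galois with group contained in $G\times G$, and since $G$ is simple the diagonal is a maximal subgroup, so the constructed map is birational once one rules out accidentally having quotiented by all of $G\times G$ (checked on an explicit non-invariant function). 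Some substitute for this degree count would be needed in any approach. Your proposed sanity checks (specialising at the known pairs, point searches) are consistent with what the paper does, but they verify rather than prove the statement.
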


Let $j_1$ and $j_2$ be the rational functions on $Z(p,k)$ giving the
$j$-invariants of the two elliptic curves.  Then $j_1 + j_2$ and $j_1
j_2$ are rational functions on $W(p,k)$. In the cases $(p,k) = (17,1)$
and $(17,3)$ we have also computed these rational functions. The
formulae are too complicated to record here, but are available
electronically from~\cite{magmafile}.

We used these formulae to find the pairs of elliptic curves
$(E_1,E_2)$ and $(E'_1,E'_2)$ specified above. In each case the curves
are not isogenous, since for example they do not have the same
conductor.  The only previous method for finding such examples was to
search in tables of elliptic curves with small conductor. (See for
example \cite[Section 3]{CF} or \cite[Section 4.3]{BM}.) Accordingly
only the second pair was previously known.

Our evidence for Conjecture~\ref{conj1} when $p=17$ is that we
searched for further rational points on $Z(17,1)$ and $Z(17,3)$, but
none of the points we found give rise to new pairs of $17$-congruent
elliptic curves.  Conjecture~\ref{conj1} has also been verified by
Cremona and Freitas~\cite[Theorem 1.3 and Section 3.7]{CF} for all
pairs of elliptic curves with conductor less than $500\,000$.

We have no theoretical explanation for our observation that the
surfaces $W(17,1)$ and $W(17,3)$ are birational. It would of course be
interesting to find one. We note that a wealth of information about
the complex geometry of the surfaces $Z(n,k)$ was computed by Kani and
Schanz \cite{KS}. In particular the surfaces $Z(17,1)$ and $Z(17,3)$
are surfaces of general type with geometric genus $10$.  We do not
expect that these surfaces are birational.

In Section~\ref{sec:verify} we verify the $17$-congruences claimed
above by comparing traces of Frobenius mod $17$.  In
Section~\ref{sec:red} we compute a genus $2$ curve whose Jacobian is
isogenous to $E_1 \times E_2$, and note that this gives another proof
that $E_1$ and $E_2$ are $17$-congruent.  We construct our birational
models for $Z(17,1)$ and $Z(17,3)$ as quotients of $X(17) \times
X(17)$. In Section~\ref{sec:X17} we give explicit equations for
$X(17)$, and in Sections~\ref{comp:symp} and~\ref{sec:anti} we compute
the quotients in the symplectic and anti-symplectic cases. In the
final two sections we describe some of the interesting curves and
points that we have so far found on these surfaces.

\section{Verification via modularity} 
\label{sec:verify}

In \cite[p.133]{M} Mazur asked whether there are any non-trivial
symplectic $n$-congruences for any integer $n \ge 7$. The question was
answered by Kraus and Oesterl\'e \cite{KO} who gave the example of the
pair of symplectically $7$-congruent elliptic curves $152a1$ and
$7448e1$. (We use the subsequent labelling of these curves in
Cremona's tables.) They also established the following results.

\begin{Lemma} 
\label{lem1}
\cite[Proposition 2]{KO} Let $p$ be a prime number. Let $E$ and $E'$
by $p$-congruent elliptic curves over $\Q$, with minimal discriminants
$\Delta$ and $\Delta'$. Suppose that $E$ and $E'$ have multiplicative
reduction at a prime $\ell \not= p$ and that the exponent
$v_\ell(\Delta)$ is coprime to $p$.  Then $v_\ell(\Delta')$ is coprime
to $p$, and the $p$-congruence is symplectic if and only if the ratio
$v_\ell(\Delta)/v_\ell(\Delta')$ is a square mod $p$.
\end{Lemma}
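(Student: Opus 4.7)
The plan is to work locally at the prime $\ell$ and use Tate's uniformization of elliptic curves with multiplicative reduction. After base changing to an unramified quadratic extension if necessary (to pass from non-split to split multiplicative reduction), we have $E(\overline{\Q_\ell}) \cong \overline{\Q_\ell}^*/q^\Z$ with Tate parameter $q \in \Q_\ell$ satisfying $v_\ell(q) = -v_\ell(j(E)) = v_\ell(\Delta)$, and similarly $E'(\overline{\Q_\ell}) \cong \overline{\Q_\ell}^*/(q')^\Z$ with $v_\ell(q') = v_\ell(\Delta')$.

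The first step is to make the Galois action on $E[p]$ explicit. In the basis $P = [\zeta_p]$, $Q = [q^{1/p}]$, any $\sigma \in G_{\Q_\ell}$ acts as the matrix
\[
\begin{pmatrix} \chi(\sigma) & a(\sigma) \\ 0 & 1 \end{pmatrix},
\]
where $\chi$ is the mod-$p$ cyclotomic character and $a$ is determined by $\sigma(q^{1/p}) = \zeta_p^{a(\sigma)} q^{1/p}$. A short calculation with a generator $\tau$ of the tame inertia quotient $I_\ell^{\mathrm{tame}}/I_\ell^{\mathrm{wild}}$, normalised by $\tau(\pi^{1/p}) = \zeta_p \pi^{1/p}$ for a uniformiser $\pi$, gives $a(\tau) \equiv v_\ell(q) \equiv v_\ell(\Delta) \pmod p$, and likewise $a'(\tau) \equiv v_\ell(\Delta') \pmod p$ for $E'$. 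The Weil pairing satisfies $e_p(P,Q) = \zeta_p$ in this basis (and analogously for $E'$).

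Next I would exploit the $p$-congruence: any $G_{\Q_\ell}$-equivariant isomorphism $\phi : E[p] \to E'[p]$ must in particular commute with $\tau$. Since $v_\ell(\Delta)$ is coprime to $p$, the matrix of $\tau$ on $E[p]$ is a nontrivial unipotent, whose fixed line is spanned by $P$. The conjugate matrix on $E'[p]$ must therefore also be nontrivial unipotent, forcing $v_\ell(\Delta') \not\equiv 0 \pmod p$; this gives the first assertion. Moreover $\phi(P)$ must lie in the fixed line of $\tau$ on $E'[p]$, hence $\phi(P) = \alpha P'$, and writing $\phi(Q) = \gamma P' + \delta Q'$, the relation $\phi(\tau Q) = \tau \phi(Q)$ yields $v_\ell(\Delta) \, \alpha \equiv v_\ell(\Delta') \, \delta \pmod p$.

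Finally, I compute $\det \phi \equiv \alpha \delta \equiv \alpha^2 \, v_\ell(\Delta) / v_\ell(\Delta') \pmod p$. Since $\phi$ respects the Weil pairing if and only if $\det \phi \equiv 1 \pmod p$, and $\alpha^2$ is automatically a square, the congruence is symplectic if and only if $v_\ell(\Delta)/v_\ell(\Delta')$ is a square mod $p$. The main subtlety is keeping track of the Tate basis and the normalisation of the tame inertia generator carefully enough that the resulting determinant is unambiguous; in the non-split case one also needs to check that base change to the unramified quadratic extension neither changes $v_\ell(\Delta)$ modulo $p$ nor alters whether the isomorphism is symplectic, which is immediate because the unramified extension does not affect the tame inertia matrix or the square/non-square classification modulo $p$.
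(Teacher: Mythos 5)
The paper offers no proof of this lemma: it is quoted directly from Kraus--Oesterl\'e \cite[Proposition 2]{KO}. Your argument is correct and is essentially the standard Tate-curve proof underlying their result: the key points --- that $v_\ell(q)=v_\ell(\Delta)$ for a Tate curve, that a tame inertia generator $\tau$ acts by $\begin{pmatrix}1 & v_\ell(\Delta)\\ 0&1\end{pmatrix}$ in the basis $(\zeta_p, q^{1/p})$ (with $\chi(\tau)=1$ since $\ell\ne p$), and that $\tau$-equivariance alone pins down $\det\phi$ up to squares --- are all handled correctly, including the observation that only the inertia action matters so the non-split case requires no extra work.
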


\begin{Lemma} 
\label{lem2}
\cite[Proposition 4]{KO} Let $E$ and $E'$ be modular elliptic curves
over $\Q$ with conductors $N$ and $N'$.  Let $S$ be the set of primes
for which one of the curves has split multiplicative reduction, and
the other has non-split multiplicative reduction. Let $M = \lcm(N,N')
\prod_{\ell \in S} \ell$ and \[\mu(M) = [\SL_2(\Z): \Gamma_0(M)] = \#
\PP^1(\Z/M\Z) = M \prod_{\ell | M} (1 + \ell^{-1}).\] Then the
following conditions are equivalent.
\begin{enumerate}
\item The Galois modules $E[p]$ and $E'[p]$ have isomorphic
  semi-simplifications.
\item $a_\ell(E) \equiv a_\ell(E') \pmod{p}$ for all primes $\ell <
  \mu(M)/6$ with $v_\ell (N N') = 0$; and $a_\ell(E) a_\ell(E') \equiv
  \ell + 1 \pmod{p}$ for all primes $\ell < \mu(M)/6$ with $v_\ell (N
  N') = 1$.
\end{enumerate}
\end{Lemma}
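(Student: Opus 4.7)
The plan is to prove the two implications separately.

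For (i) $\Rightarrow$ (ii): Assume the semi-simplifications of the mod-$p$ Galois representations $\rho, \rho' : G_\Q \to \operatorname{GL}_2(\F_p)$ attached to $E, E'$ are isomorphic, so that $\tr \rho(\operatorname{Frob}_\ell) \equiv \tr \rho'(\operatorname{Frob}_\ell) \pmod{p}$ whenever both sides are defined. At primes $\ell$ with $v_\ell(NN') = 0$ and $\ell \ne p$, both representations are unramified, and the Eichler--Shimura relation gives $\tr \rho(\operatorname{Frob}_\ell) \equiv a_\ell(E) \pmod{p}$ and similarly for $E'$, yielding the first congruence. At a prime $\ell$ with $v_\ell(NN') = 1$, exactly one curve (say $E$) has multiplicative reduction while the other has good reduction. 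The local semi-simplification of $E[p]$ at $\ell$ is $\alpha \oplus \alpha \chi_p$, where $\alpha$ is the unramified character sending $\operatorname{Frob}_\ell$ to $a_\ell(E) \in \{\pm 1\}$ and $\chi_p$ is the mod-$p$ cyclotomic character, so the trace of Frobenius is $a_\ell(E)(1 + \ell)$. Equating with $a_\ell(E')$ and multiplying through by $a_\ell(E)$ (which squares to $1$) yields the product relation.

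For (ii) $\Rightarrow$ (i): By modularity, attach weight-$2$ newforms $f \in S_2(\Gamma_0(N))$ and $f' \in S_2(\Gamma_0(N'))$ to $E$ and $E'$, and regard them as elements of $S_2(\Gamma_0(M))$. The Sturm bound for this space in weight $2$ is $2\mu(M)/12 = \mu(M)/6$, which matches the range of primes $\ell$ in (ii). The strategy is to construct, from $f$ and $f'$, two Hecke eigenforms $g, g'$ in $S_2(\Gamma_0(M))$ whose $q$-expansions are congruent modulo a prime $\mathfrak p$ above $p$; Deligne's construction of Galois representations attached to mod-$p$ eigenforms, together with Brauer--Nesbitt and Chebotarev, will then supply the isomorphism of semi-simplifications.

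At each prime $\ell \mid M$ with $\ell$ not dividing the original level of $f$, the oldform space contains two $U_\ell$-eigenforms with eigenvalues the roots of $x^2 - a_\ell(E) x + \ell$, and similarly for $f'$. The hypothesis $a_\ell(E) a_\ell(E') \equiv 1 + \ell \pmod{p}$ at primes with $v_\ell(NN') = 1$ is precisely what is needed to choose matching $U_\ell$-eigenform lifts, so that the resulting eigenforms $g, g'$ have congruent $U_\ell$-eigenvalues mod $\mathfrak p$. Combined with the $T_\ell$-eigenvalue congruences at $\ell \nmid M$ supplied by (ii), the Sturm bound gives $g \equiv g' \pmod{\mathfrak p}$, from which the Galois-theoretic conclusion follows. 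The main obstacle will be the bookkeeping of oldform lifts at primes dividing $M$: verifying that compatible $U_\ell$-eigenform choices can always be made and that the mod-$\mathfrak p$ Galois representation attached to $g$ really does recover the semi-simplification of $\rho$, rather than that of a twist or of a form at an insufficient auxiliary level.
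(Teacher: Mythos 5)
This lemma is stated in the paper as a direct quotation of Kraus--Oesterl\'e \cite[Proposition~4]{KO}; the paper itself gives no proof, so your proposal has to be measured against their argument. Your architecture is in fact the right one and essentially theirs: (i)$\Rightarrow$(ii) by computing traces of Frobenius on the semisimplification from the local description at good and multiplicative primes, and (ii)$\Rightarrow$(i) by stabilising the two newforms to $U_\ell$-eigenforms of level $M$ with congruent Hecke eigenvalues, applying the weight-$2$ Sturm bound $\mu(M)/6$ for $\Gamma_0(M)$, and finishing with Brauer--Nesbitt and Chebotarev (the detour through Deligne's construction is unnecessary, since the Galois representations are already supplied by the elliptic curves). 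Your key claim about the product relation is correct and worth writing out: if $v_\ell(N)=1$ and $v_\ell(N')=0$, then $a_\ell(E)=\pm 1$ is the $U_\ell$-eigenvalue of the $\ell$-new form, and $a_\ell(E)^2-a_\ell(E')a_\ell(E)+\ell\equiv 1-(\ell+1)+\ell\equiv 0\pmod p$, so $a_\ell(E)$ is a root mod $p$ of the Hecke polynomial $x^2-a_\ell(E')x+\ell$ of the $\ell$-old form, which can therefore be stabilised compatibly.

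The genuine gap is at the primes your construction does not cover, and it is not mere bookkeeping. First, you never use the set $S$ or the extra factor $\prod_{\ell\in S}\ell$ in $M$, and without it the strategy breaks: at $\ell\in S$ both forms are $\ell$-new at level $\lcm(N,N')$ with $U_\ell$-eigenvalues $+1$ and $-1$, which are incongruent mod an odd $p$, and there is no stabilisation freedom at that level. The whole point of forcing $v_\ell(M)=2$ at these primes is that each form then acquires an old companion $f-a_\ell(f)\,f|V_\ell$ with $U_\ell$-eigenvalue $0$, and one takes those on both sides. Second, at primes with $v_\ell(NN')\ge 2$ hypothesis (ii) imposes nothing, so you must check directly that matching eigenvalues exist; this always works (whichever form is $\ell$-old can be depleted to $U_\ell$-eigenvalue $0$ because $v_\ell(M)\ge 2$, and a form that is $\ell$-new with $\ell^2$ dividing its level already has $U_\ell$-eigenvalue $0$), but it is a case analysis that must appear in the proof, since a naive choice of stabilisation (e.g.\ a root of $x^2-a_\ell x+\ell$, whose roots have product $\ell\not\equiv 0$) cannot match an eigenvalue $0$. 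Two smaller omissions: the Sturm bound needs $a_n\equiv a_n'$ for \emph{all} $n$ up to $\mu(M)/6$, not just prime $n$, so you must invoke multiplicativity and the recursions for $a_{\ell^k}$; and the prime $\ell=p$, where Frobenius traces are undefined, needs a separate word in both directions.
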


Since $X_0(17)$ is a rank $0$ elliptic curve, there are only finitely
many $j$-invariants of elliptic curves over $\Q$ admitting a rational
$17$-isogeny. As noted in \cite[Section~3.8]{Cr}, the exceptional
$j$-invariants are $-17^2 \cdot 101^3/2$ and $-17 \cdot 373^3/2^{17}$.
Ignoring these two $j$-invariants, the conclusion of
Lemma~\ref{lem2}(i), when $p=17$, is that $E$ and $E'$ are
$17$-congruent.

\begin{Example}
\label{ex1}
Let $E_1$ and $E_2$ be the elliptic curves defined in the
introduction.  For the primes $\ell < 50$ the traces of Frobenius are
as follows.
\[ \begin{array}{c|ccccccccccccccc}
\ell & 2 & 3 & 5 & 7 & 11 & 13 & 17 & 19 & 23 & 29 & 31 & 37 & 41 & 43 & 47 \\ \hline
a_\ell(E_1) & -1 & 0 & 1 & -1 & -5 & 1 & 2 & 4 & -1 & -3 & -2 & -11 & 5 & -4 & -9 \\ 
a_\ell(E_2) &  -1 & 0 & 1 & -1 & 1 & 1 & 2 & 4 & -1 & -3 & -2 & 6 & -12 & -4 & -9 
\end{array} \]
In the notation of Lemma~\ref{lem2} we have $S = \emptyset$ and $M =
N(E_2)$.  It takes about three hours\footnote{Running Magma on a
  single core of the author's desktop.} to verify that $a_\ell(E_1)
\equiv a_\ell(E_2) \pmod{17}$ for all primes $\ell < \mu(M)/6 \approx
1.033 \times 2^{30}$ with $\ell \not= 11$.  This shows that $E_1$ and
$E_2$ are $17$-congruent. Since
\begin{align*}
\Delta(E_1) &= 2^3 \cdot 5^3 \cdot 13 \cdot 59^2 \cdot 191^3, \\
\Delta(E_2) &= -2^{14} \cdot 5^{11} \cdot 11^{17} \cdot 13 \cdot 59 \cdot 191^9,
\end{align*}
it follows by Lemma~\ref{lem1} (with $\ell = 2,5,13$ or $59$) that the
congruence is symplectic.
\end{Example}

\begin{Example}
Let $E'_1$ and $E'_2$ be the elliptic curves defined in the
introduction.  For the primes $\ell < 50$ the traces of Frobenius are
as follows.
\[ \begin{array}{c|ccccccccccccccc}
\ell & 2 & 3 & 5 & 7 & 11 & 13 & 17 & 19 & 23 & 29 & 31 & 37 & 41 & 43 & 47 \\ \hline
a_\ell(E'_1) & -1 & 1 & 0 & 0 & 0 & -3 & 2 & -1 & -2 & -8 & 8 & -7 & 0 & 8 & -10 \\
a_\ell(E'_2) & -1 & 1 & 0 & 0 & 0 & 1 & 2 & -1 & -2 & 9 & -9 & 10 & 0 & 8 & 7 
\end{array} \]
In the notation of Lemma~\ref{lem2} we have $S = \emptyset$ and $M =
N(E'_2)$.  It takes a fraction of a second to verify that $a_\ell(E_1)
\equiv a_\ell(E_2) \pmod{17}$ for all primes $\ell < \mu(M)/6 = 15680$
with $\ell \not= 13$.  This shows that $E'_1$ and $E'_2$ are
$17$-congruent. Since
\begin{align*}
\Delta(E'_1) &= -3^5 \cdot 5^2 \cdot 7^2, \\
\Delta(E'_2) &= -3^2 \cdot 5^2 \cdot 7^2 \cdot 13^{17},
\end{align*}
it follows by Lemma~\ref{lem1} (with $\ell = 3$) that the
congruence is anti-symplectic.
\end{Example}

\begin{Remark} (i) It may be possible to reduce the Sturm bound,
  and hence the runtime, in Example~\ref{ex1} by using a result
  similar to \cite[Theorem~9.21]{Stein} or by using level lowering.
  We did not pursue this. \\
  (ii) Methods for determining the symplectic type in situations where
  Lemma~\ref{lem1} does not apply have recently been studied in
  \cite{CF,FK}. \\
  (iii) The existence of a prime $\ell$ for which Lemma~\ref{lem1}
  applies, together with the Weil pairing and the fact our elliptic
  curves do not admit a rational $17$-isogeny, is already enough (see
  \cite[Chapter IV, Section 3.2]{SerreMcGill}) to show that in each
  case the mod $17$ Galois representation is surjective.
\end{Remark}

\section{Verification via genus $2$ Jacobians} 
\label{sec:red}

Let $E_1$ and $E_2$ be $n$-congruent elliptic curves over $\Q$, where
the congruence $\psi$ reverses the sign of the Weil pairing. Then the
quotient $J$ of $E_1 \times E_2$ by the graph of $\psi$ is a
principally polarised abelian surface.  It is shown in \cite[Section
  1]{FreyKani} that if $n$ is odd and $E_1$ and $E_2$ are not
isogenous, then $J$ is the Jacobian of a genus $2$ curve $C$ defined
over $\Q$, and there are degree $n$ morphisms $C \to E_1$ and $C \to
E_2$, also defined over $\Q$. For further details of this construction
of reducible genus $2$ Jacobians, see for
example~\cite{BHLS,BD-(44)-splitting,FreyKani,Kumar,Kuhn,Shaska}.

Since $-1$ is a quadratic residue mod $17$, the elliptic curves $E_1$
and $E_2$ defined in the introduction are of the form considered in
the last paragraph.  In this section we compute the corresponding
genus $2$ curve, and note that this gives another proof that $E_1$ and
$E_2$ are $17$-congruent.


\begin{Lemma}
\label{get-tau}
Let $E_1 = \C/(\Z + \tau_1 \Z)$ and $E_2 = \C/(\Z + \tau_2 \Z)$ with
$\Im(\tau_1), \Im(\tau_2)>0$. Let $\psi: E_1[n] \to E_2[n]$ be the
isomorphism given by $\frac{1}{n}(r + s \tau_1) \mapsto \frac{1}{n}(r
- s \tau_2)$ for $r,s = 0,1,\ldots,n-1$. (Note the minus sign!) Then
the quotient $J$ of $E_1 \times E_2$ by the graph of $\psi$ is
represented in the Siegel upper half-space by
\[   \tau = \begin{pmatrix} n \tau_1 & \tau_1 \\
 \tau_1 & (\tau_1 + \tau_2)/n \end{pmatrix}. \]
\end{Lemma}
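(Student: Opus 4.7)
The plan is to realise $J$ explicitly as a complex torus $\C^2/\widetilde\Lambda$, equip it with the principal polarisation descended from $E_1\times E_2$, and then exhibit a symplectic basis of $\widetilde\Lambda$ whose normalised period matrix has the form $(I\mid\tau)$ with $\tau$ as claimed.

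First I would identify the lattice. Since $J=(E_1\times E_2)/\Gamma_\psi$, the lattice $\widetilde\Lambda\subset\C^2$ is the preimage of the graph $\Gamma_\psi$ under the covering map $\C^2\to E_1\times E_2$; lifting the two generators of $\Gamma_\psi$ one obtains
\[ \widetilde\Lambda \;=\; (\Lambda_1\oplus\Lambda_2)\,+\,\Z\cdot\bigl(\tfrac{1}{n},\tfrac{1}{n}\bigr)\,+\,\Z\cdot\bigl(\tfrac{\tau_1}{n},-\tfrac{\tau_2}{n}\bigr), \]
where $\Lambda_i=\Z+\tau_i\Z$. Because $\psi$ reverses the Weil pairing, $\Gamma_\psi$ is maximal isotropic in $(E_1\times E_2)[n]$, so the product polarisation $L$ on $E_1\times E_2$ has the property that $L^{\otimes n}$ descends to a principal line bundle on $J$. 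Tracking this descent, the resulting Riemann form on $\widetilde\Lambda$ is the bilinear extension of $n$ times the product form, namely $\widetilde E(v,w)=n\bigl(E_1(v_1,w_1)+E_2(v_2,w_2)\bigr)$, where $E_i(a+b\tau_i,c+d\tau_i)=ad-bc$ is the standard principal form on $\Lambda_i$.

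The key step is to guess a symplectic basis adapted to the claim. I would try
\[ g_1=\bigl(\tfrac{1}{n},\tfrac{1}{n}\bigr),\qquad g_2=(0,-1),\qquad g_3=(\tau_1,0),\qquad g_4=\bigl(\tfrac{\tau_1}{n},-\tfrac{\tau_2}{n}\bigr), \]
which all manifestly lie in $\widetilde\Lambda$. A short calculation with the formula for $E_i$ above confirms $\widetilde E(g_i,g_{j+2})=\delta_{ij}$ and that all other pairings vanish, so $(g_1,g_2,g_3,g_4)$ is a symplectic basis.

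Finally, writing $\pi_A=(g_1\mid g_2)$ and $\pi_B=(g_3\mid g_4)$ as the two halves of the period matrix in these coordinates, one normalises via $\tau=\pi_A^{-1}\pi_B$; a direct $2\times 2$ matrix multiplication yields exactly the matrix in the statement. The genuine obstacle in this approach is the guessing step: many symplectic bases of $\widetilde\Lambda$ exist and they produce $\operatorname{Sp}_4(\Z)$-equivalent but rather differently presented Siegel matrices, so some trial is needed to hit the particular symmetric form asserted in the lemma rather than some equivalent but less tidy representative.
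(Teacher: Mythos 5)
Your proposal is correct and follows essentially the same route as the paper's proof: both identify the lattice of $J$ as the one spanned by $(\tau_1,0)$, $(\tau_1/n,-\tau_2/n)$, $(1/n,1/n)$, $(0,-1)$, verify that $n$ times the product Riemann form makes this a symplectic basis (the vanishing of the pairing between $(1/n,1/n)$ and $(\tau_1/n,-\tau_2/n)$ being exactly where the minus sign in $\psi$ is used), and then normalise the period matrix to obtain the stated $\tau$. The only differences are cosmetic: the ordering of the basis vectors and the sign convention for the alternating form, which compensate each other so that both computations produce the identical matrix.
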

\begin{proof}
We have $J \isom \C^2/\Lambda$ where $\Lambda$ is the lattice
spanned by the columns $b_1, \ldots, b_4$ of the matrix 
\[ \begin{pmatrix} \tau_1 & \tau_1/n & 1/n & 0 \\
0 & -\tau_2/n & 1/n & -1 \end{pmatrix}. \] The principal polarisation
on $J$ is given by the Hermitian Riemann form
\[ H((z_1,z_2),(w_1,w_2)) = n \left( 
\frac{z_1 \overline{w}_1}{\Im(\tau_1)} 
+ \frac{z_2 \overline{w}_2}{\Im(\tau_2)} \right), \]  
whose imaginary part is given with respect to the basis 
$b_1, \ldots, b_4$ for $\Lambda$ by
\[ \begin{pmatrix} 0 & I_2 \\ -I_2 & 0 \end{pmatrix}. \]
We therefore take
\[ \tau = ( b_3 | b_4 )^{-1} ( b_1 | b_2) = 
\begin{pmatrix} n & 0 \\ 1 & -1 \end{pmatrix}
\begin{pmatrix} \tau_1 & \tau_1/n \\ 0 & -\tau_2/n \end{pmatrix}
= \begin{pmatrix} n \tau_1 & \tau_1 \\
 \tau_1 & (\tau_1 + \tau_2)/n \end{pmatrix}. \qedhere \]
\end{proof}

The elliptic curves $E_1$ and $E_2$ defined in the introduction are
represented in the upper half-plane by $\tau_1 \approx 0.1142862335 i$
and $\tau_2 \approx 0.5000000000 + 1.415897663 i$.  We computed $\tau$
in the Siegel upper half-space corresponding to our genus $2$ Jacobian
by applying Lemma~\ref{get-tau} (with $n=17$) to $\tau_1$ and $\tau'_2
= (64 \tau_2 - 15)/(-17 \tau_2 + 4)$.  The formula for $\tau'_2$ had
to be guessed, but since the congruence must respect complex
conjugation there were only $8$ possibilities to try.  We then used
the methods described by van Wamelen \cite{Wamelen} to compute the
Igusa-Clebsch invariants of our genus~$2$ curve to 300 decimal digits
of precision. Recognising these as rational numbers, we next used the
method of Mestre \cite{Mestre} to find a genus $2$ curve over $\Q$
with these invariants.  Up to quadratic twist, this gave the genus $2$
curve $C$ with equation $y^2 = f_1(x) f_2(x)$ where
\begin{align*}
f_1(x) &= 196081931 x^3 + 1143338037 x^2 - 801791940 x + 135616700, \\
f_2(x) &= -25996 x^3 + 1698260 x^2 - 6845267 x + 3822078.
\end{align*}
We chose this particular quadratic twist since it satisfies $\#\Jac(C)(\F_p)
= \# E_1(\F_p) \cdot \#E_2(\F_p)$ for many primes $p$ of good
reduction.

To prove that our equation for $C$ is correct (without relying on the
numerical approximations in the last paragraph) we also computed the
degree $17$ morphisms $\phi_1 : C \to E_1$ and $\phi_2 : C \to E_2$.
The $x$-coordinate of $\phi_i$ is given by $\xi_i(x) = h_i(x)/(f_i(x)
g_i(x)^2)$ where $g_i$ and $h_i$ are certain polynomials of degrees
$7$ and $17$. Working mod $p = 101$ we find
\begin{align*}
g_1(x) &= 25 x^7 + 56 x^6 + 31 x^5 + 99 x^4 + 100 x^3 + 42 x^2 + 79 x + 5, \\
g_2(x) &= 3 x^7 + 76 x^6 + 44 x^5 + 97 x^4 + 52 x^3 + 38 x^2 + 75 x + 2, \\
h_1(x) &= 16 x^{17} + 6 x^{16} + 57 x^{15} + 54 x^{14} + 94 x^{13}
  + 79 x^{12} + 77 x^{11} + 55 x^{10} \\ & \quad + 74 x^9 + 78 x^8 + 97 x^7
  + 79 x^6 + 25 x^5 + 96 x^4 + 98 x^3 + 46 x^2 + 4 x + 99, \\ 
  h_2(x) &= 67 x^{17} + 25 x^{16} + x^{15} + 22 x^{14} + 84 x^{13}
  + 94 x^{12} + 93 x^{11} + 95 x^{10} \\ & \quad + 34 x^9 + 40 x^8 + 99 x^7
  + 84 x^6 + 43 x^5 + 12 x^4 + 59 x^3 + 13 x^2 + 26 x + 98.
\end{align*}

The full expressions for $g_1,g_2,h_1,h_2 \in \Z[x]$ may be found in
\cite{magmafile}. We do not record these here, since some of the
coefficients have nearly 100 decimal digits.

Our method to compute these polynomials was to compute them mod $p$
for many primes $p$ and then use the Chinese remainder theorem.  To
compute them mod $p$ we looped over all possibilities for the map
$C(\F_p) \to E_i(\F_p)$, compatible with the group laws on the
Jacobians, and then solved for the rational function $\xi_i$ (with
numerator and denominator of degree at most $17$) by interpolation.

The $y$-coordinates of the maps $\phi_i : C \to E_i$ are of course
even more complicated to write down. However, a convenient alternative
to recording these directly is to note that the invariant
differentials on $E_1$ and $E_2$ pull back to the following ``elliptic
differentials'' on $C$:
\begin{align*}
\phi_1^*\left(\frac{dx}{2y + x}\right) &= \frac{(273857 x - 336364)dx}{y}, \\
\phi_2^*\left(\frac{dx}{2y + x}\right) &= \frac{(2758 x + 1630)dx}{y}.
\end{align*}

Our second proof that $E_1$ and $E_2$ are $17$-congruent is completed
by the next lemma, which we record for convenience, but is essentially
well known.  Compared to the proof in Section~\ref{sec:verify}, this
proof takes a fraction of the computer time, since we only have to
check that our formulae for $\phi_1$ and $\phi_2$ do indeed define
morphisms $C \to E_1$ and $C \to E_2$.

\begin{Lemma}
Let $C$ be a genus $2$ curve and let $p$ be a prime.  Let $\phi_1 : C
\to E_1$ and $\phi_2: C \to E_2$ be morphisms of degree $p$, where
$E_1$ and $E_2$ are non-isogenous elliptic curves.  Then $E_1$ and
$E_2$ are $p$-congruent.
\end{Lemma}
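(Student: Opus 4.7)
The plan is the standard Frey--Kani construction: build an isogeny $\alpha : E_1 \times E_2 \to \Jac(C)$ whose kernel is the graph of an isomorphism $E_2[p] \xrightarrow{\sim} E_1[p]$. Since $\alpha$ is defined over $\Q$, this graph relation is automatically $\Gal(\Qbar/\Q)$-equivariant, which by definition makes $E_1$ and $E_2$ $p$-congruent.

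Define $\alpha(P_1,P_2) = \phi_1^*(P_1) + \phi_2^*(P_2)$, where $\phi_i^* : E_i \to \Jac(C)$ is the pullback on Picard varieties, and set $\beta = (\phi_{1*},\phi_{2*}) : \Jac(C) \to E_1 \times E_2$. I would first check that $\phi_i^*$ is injective: a nontrivial kernel $K \subset E_i[p]$ would make $\phi_i^*$ factor as $E_i \twoheadrightarrow E_i/K \hookrightarrow \Jac(C)$, and by Albanese functoriality $\phi_i$ itself would then factor as $C \to E_i/K \to E_i$ with the second map an isogeny of degree $|K|$. Since $\deg\phi_i = p$ is prime, the remaining factor $C \to E_i/K$ would have degree $1$, contradicting $C$ having genus $2$. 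Next, using that $E_1$ and $E_2$ are non-isogenous, every homomorphism $E_j \to E_i$ vanishes, so $\phi_{i*}\circ\phi_j^* = 0$ for $i \ne j$, and therefore $\beta\circ\alpha = [p] \oplus [p]$, a map of degree $p^4$. The images $\phi_1^*(E_1)$ and $\phi_2^*(E_2)$ are then distinct elliptic curves in $\Jac(C)$, so $\alpha$ is surjective, hence an isogeny.

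To pin down $\ker\alpha$, I would identify $\beta$ with the dual of $\alpha$ via the canonical principal polarisations on $\Jac(C)$ and on $E_1 \times E_2$, giving $\deg\alpha = \deg\beta$; combined with $\deg(\beta\alpha) = p^4$ this forces $\deg\alpha = p^2$. The relation $\beta\circ\alpha = [p] \oplus [p]$ shows $\ker\alpha \subset E_1[p] \times E_2[p]$, a subgroup that now has exactly $p^2$ elements. The intersection $\ker\alpha \cap (E_i[p] \times 0)$ (and similarly for the other factor) equals $\ker(\phi_i^*) \cap E_i[p] = 0$ by the injectivity established above, so projection of $\ker\alpha$ onto either factor is a bijection. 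Thus $\ker\alpha$ is the graph of a group isomorphism $\psi : E_2[p] \to E_1[p]$.

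The main obstacle is the polarisation bookkeeping in the third paragraph, which is what yields $\deg\alpha = p^2$ and so gives $\ker\alpha$ the correct size to be a graph. Once this and the injectivity of $\phi_i^*$ (both ultimately consequences of $p$ being prime and $C$ having genus $2$) are in hand, the conclusion is formal: $\psi$ is Galois-equivariant because $\alpha$ is defined over $\Q$, and a Galois-equivariant isomorphism $E_1[p] \cong E_2[p]$ is exactly a $p$-congruence between $E_1$ and $E_2$.
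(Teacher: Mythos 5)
Your proposal is correct and follows essentially the same route as the paper: injectivity of the pullbacks $\phi_i^*$ from the prime degree, vanishing of the cross-compositions from non-isogeny, and duality of pullback and pushforward under the principal polarisations to get $\deg\alpha=p^2$ with $\beta\circ\alpha=[p]\oplus[p]$. The only cosmetic difference is that you read off the congruence from $\ker\alpha$ being the graph of an isomorphism $E_2[p]\to E_1[p]$, whereas the paper phrases it as $E_1[p]\isom J[\phi]\isom E_2[p]$; these are equivalent.
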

\begin{proof}
Since $E_1$, $E_2$ and $J = \Jac C$ are principally polarised abelian
varieties, we identify them with their duals without further comment.

The map $\phi_1 : C \to E_1$ induces by pull back a map $E_1 \to J$.
This map is injective since otherwise, by
\cite[Proposition~11.4.3]{BL}, $\phi_1$ would have to factor via a
non-trivial isogeny of elliptic curves, which is not possible by our
assumption that $\phi_1$ has prime degree.  Since $E_1$ and $E_2$ are
not isogenous, the composite of the maps $E_1 \to J$ and $J \to E_2$
induced by $\phi_1$ and $\phi_2$ must be the zero map.  The same
observations apply with the roles of $E_1$ and $E_2$ swapped over. The
pull back and push forward maps associated to $\phi_1$ and $\phi_2$
therefore define dual isogenies
\[ E_1 \times E_2 \stackrel{\phihat}{\ra} J \stackrel{\phi}{\ra} 
E_1 \times E_2 \]
whose composite is multiplication-by-$p$. In particular $\deg \phi
 = \deg \phihat = p^2$ and there are isomorphisms of Galois modules
$E_1[p] \isom J[\phi] \isom E_2[p]$.
\end{proof}

\section{The modular curve $X(17)$}
\label{sec:X17} 

Let $\zeta = e^{2\pi i/17}$ and $\xi_k = \zeta^k + \zeta^{-k}$.  Let
$G \isom \PSL_2(\Z/17\Z)$ be the subgroup of $\SL_9(\C)$ generated by
$M_2$ and $M_{17}$ where
\[ M_2 = \frac{-1}{\sqrt{17}} \begin{pmatrix} 
 1&   1&     1&     1&     1&     1&     1&     1&     1 \\
 2&\xi_{3}&\xi_{8}&\xi_{7}&\xi_{4}&\xi_{5}&\xi_{2}&\xi_{6}&\xi_{1} \\
 2&\xi_{8}&\xi_{7}&\xi_{4}&\xi_{5}&\xi_{2}&\xi_{6}&\xi_{1}&\xi_{3} \\
 2&\xi_{7}&\xi_{4}&\xi_{5}&\xi_{2}&\xi_{6}&\xi_{1}&\xi_{3}&\xi_{8} \\
 2&\xi_{4}&\xi_{5}&\xi_{2}&\xi_{6}&\xi_{1}&\xi_{3}&\xi_{8}&\xi_{7} \\
 2&\xi_{5}&\xi_{2}&\xi_{6}&\xi_{1}&\xi_{3}&\xi_{8}&\xi_{7}&\xi_{4} \\
 2&\xi_{2}&\xi_{6}&\xi_{1}&\xi_{3}&\xi_{8}&\xi_{7}&\xi_{4}&\xi_{5} \\
 2&\xi_{6}&\xi_{1}&\xi_{3}&\xi_{8}&\xi_{7}&\xi_{4}&\xi_{5}&\xi_{2} \\
 2&\xi_{1}&\xi_{3}&\xi_{8}&\xi_{7}&\xi_{4}&\xi_{5}&\xi_{2}&\xi_{6}
\end{pmatrix} \]
and $M_{17} = \Diag(1, \zeta, \zeta^9, \zeta^{13}, \zeta^{15}, 
                   \zeta^{16}, \zeta^8, \zeta^4, \zeta^2)$.
The pattern of subscripts in the definition of $M_2$ is the 
sequence of powers of $3$ in $(\Z/17\Z)/\{\pm 1\}$.

We write $\C[x_0,\ldots,x_8]_d$ for the space of homogeneous
polynomials of degree $d$.  An {\em invariant} of degree $d$ is a
polynomial $I \in \C[x_0,\ldots,x_8]_d$ satisfying $I \circ g = I$ for
all $g \in G$.  In degrees $2$ and $3$ the only invariants are
\begin{align*}
Q &= x_0^2 + x_1 x_5 + x_2 x_6 + x_3 x_7 + x_4 x_8, \\
D &= 2 x_0 (x_1 x_5 - x_2 x_6 + x_3 x_7 - x_4 x_8) \\
                  & \qquad \qquad - x_1^2 x_4 + x_2^2 x_5 - x_3^2 x_6 + x_4^2 x_7 
                  - x_5^2 x_8 + x_6^2 x_1  - x_7^2 x_2 + x_8^2 x_3. 
\end{align*}
In degree $4$ we have the invariants $Q^2$ and 
\begin{align*}
F &= x_0^4 + x_0 (x_1^2 x_4 + x_2^2 x_5 + x_3^2 x_6 + x_4^2 x_7
  + x_5^2 x_8 + x_1 x_6^2 + x_2 x_7^2 + x_3 x_8^2) \\
  &+ x_1 x_3 x_5 x_7 + x_2 x_4 x_6 x_8
  + x_1 x_2 x_5 x_6 + x_2 x_3 x_6 x_7 + x_3 x_4 x_7 x_8 + x_1 x_4 x_5 x_8
  \\ & + x_1^2 x_3 x_8 + x_1 x_2^2 x_4 + x_2 x_3^2 x_5 + x_3 x_4^2 x_6
  + x_4 x_5^2 x_7 + x_5 x_6^2 x_8  + x_1 x_6 x_7^2 + x_2 x_7 x_8^2.
\end{align*}

\begin{Proposition}
\label{prop:96+168}
Let $C \subset \PP^8$ be the curve defined by the vanishing of $Q$ and
all partial derivatives of $F$. Then $C = C_1 \cup C_2$ where $C_1$
and $C_2$ are curves of degrees $96$ and $168$, each isomorphic to the
modular curve $X(17)$. The $144$ cusps on $C_1$ are cut out (each with
multiplicity $2$) by the cubic form $D$. Moreover $D$ vanishes
identically on $C_2$.
\end{Proposition}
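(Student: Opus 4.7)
The plan is to realise $X(17)$ explicitly inside $\PP^8$ via two $G$-equivariant embeddings related by Galois, verify the stated equations by a finite $q$-expansion check, and then analyse the scheme $V(Q, \partial F/\partial x_0, \ldots, \partial F/\partial x_8)$ to identify its components. First I would parametrise $X(17) \hookrightarrow \PP^8$ using $q$-expansions at a cusp of a $G$-stable $9$-dimensional space of modular forms on $\Gamma(17)$, matching one of the two irreducible $9$-dimensional projective representations of $G \isom \PSL_2(\F_{17})$ carried by the coordinates $x_0, \ldots, x_8$. These representations are defined over $\Q(\sqrt{17})$ and are interchanged by $\sqrt{17} \mapsto -\sqrt{17}$ (as is already visible in the definition of $M_2$), so this process yields two embeddings with images $C_1$ and $C_2$, each abstractly isomorphic to $X(17)$. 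Any form in $\Q[x_0, \ldots, x_8]$ that vanishes on one vanishes on the other.

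Second, I would verify that $Q$ and all $\partial F/\partial x_i$ vanish on $C_1$. Substituting the $q$-expansions of $x_0, \ldots, x_8$ into $Q$ and into a single partial (say $\partial F/\partial x_0$), and checking that the resulting $q$-series is zero to sufficient order, is enough: $G$-invariance of $Q$, $G$-equivariance of $\nabla F$, and the transitive $G$-action on cusps propagate the vanishing to the remaining partials and to all other cusps. The same computation applied to the Galois-conjugate parametrisation yields vanishing on $C_2$.

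Third, I would pin down the degrees and rule out extra components. The degree of $C_i$ is the degree of the pullback of $\mathcal{O}_{\PP^8}(1)$ along the chosen embedding, which I would compute either from the weight and $G$-module structure of the forms used or via a Hilbert-polynomial computation after reduction modulo a small auxiliary prime of good reduction; the expected values $\deg C_1 = 96$ and $\deg C_2 = 168$ are consistent with $96 + 168 = 264 = 2g(X(17)) - 2$. A primary decomposition (or saturation) of the ideal $(Q, \partial F/\partial x_0, \ldots, \partial F/\partial x_8)$ in a computer algebra system, carried out modulo a convenient prime and then lifted by semicontinuity, confirms that the scheme $C$ is the reduced union $C_1 \cup C_2$ and nothing more.

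Fourth, for the cusp statements: $D$ is a $G$-invariant cubic and $G$ acts transitively on the $144$ cusps of $X(17)$, so it suffices to check at a single cusp that the $q$-expansion of $D$ pulled back to $C_1$ vanishes to order exactly $2$; the degree count $3 \deg C_1 = 288 = 2 \cdot 144$ then forces this multiplicity at every cusp. Running the analogous substitution into the Galois-conjugate parametrisation shows that $D$ restricted to $C_2$ is already the zero $q$-series, hence vanishes identically on $C_2$. The hardest step will be producing the $q$-expansions of the embedding with enough precision and in the correct $G$-equivariant basis so that the $G$-module decomposition of the homogeneous coordinate ring is transparent; once this is in hand the remaining checks, though computationally heavy, are finite and routine.
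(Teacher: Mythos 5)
The central gap is your identification of the two components. You propose that $C_1$ and $C_2$ are the images of $X(17)$ under the two embeddings attached to the two Galois-conjugate $9$-dimensional representations of $\PSL_2(\F_{17})$, and you deduce that ``any form in $\Q[x_0,\ldots,x_8]$ that vanishes on one vanishes on the other.'' This cannot be right, and it contradicts the very statement you are proving: Galois-conjugate curves would have the same degree, whereas $\deg C_1 = 96 \neq 168 = \deg C_2$, and the $\Q$-rational cubic $D$ vanishes identically on $C_2$ while cutting out a nonzero divisor (twice the cusps) on $C_1$. The correct picture is that both components sit in the same $\PP^8$ and are stable under the same group $G \subset \SL_9(\C)$: $C_1$ is Klein's $A$-curve, embedded by the $9$-dimensional space of sections of $8\lambda$ (where $\lambda$ is the degree-$12$ generator of the $G$-fixed divisor classes, so $\deg C_1 = 8\cdot 12 = 96$), while $C_2$ is embedded by a complementary copy of the same $9$-dimensional representation inside $H^0(X(17),14\lambda)$, namely a complement to the image of $S^2 H^0(X(17),7\lambda)$, giving hyperplane section $14\lambda$ and $\deg C_2 = 14 \cdot 12 = 168$. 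Without an explicit second parametrisation of this kind you cannot verify the equations on $C_2$, compute its degree, or show that $D|_{C_2}=0$; the Galois-conjugation recipe does not produce it.

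The rest of your plan is workable and close in spirit to the paper's, with two differences worth noting. For the decomposition $C = C_1 \cup C_2$ the paper avoids the delicate ``primary decomposition mod $p$ plus semicontinuity'' step by a characteristic-zero certificate: the quartics $x_0^4 + x_1x_3x_5x_7$ and $x_0^4 + x_2x_4x_6x_8$ (with $Q$ and the partials of $F$) cut out $C_1$ set-theoretically, $D$ (with the same forms) cuts out $C_2$, and both $(x_0^4 + x_1x_3x_5x_7)D^2$ and $(x_0^4 + x_2x_4x_6x_8)D^2$ lie in the ideal generated by $Q$ and the partials of $F$, so every point of $C$ lies on $C_1$ or $C_2$. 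For the cusps no local expansion is needed: the $G$-orbits on $C_1$ have sizes $144$, $816$, $1224$ or $2448$, so a $G$-stable effective divisor of degree $3\cdot 96 = 288$ must be twice the cusps. Your variant (order exactly $2$ at one cusp plus transitivity) also works, but note that verifying a single non-invariant partial of $F$ by a $q$-expansion at one cusp requires a precise precision bound; the clean statement is that the restriction of the irreducible $9$-dimensional span of the partials to $H^0(C_1,\mathcal{O}(3))$ is $G$-equivariant, hence injective or zero.
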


\begin{proof}
Let $p \ge 5$ be a prime. The group $\PSL_2(\Z/p\Z)$ acts on $X(p)$
with quotient the $j$-line, and the group of divisor classes fixed by
this group action is an infinite cyclic group, generated by $\lambda$
of degree $(p^2 - 1)/24$. Let $m = (p-1)/2$. Klein gave equations for
$X(p)$ embedded in $\PP^{m-1}$ and $\PP^{m}$ with hyperplane sections
$(m-1)\lambda$ and $m\lambda$. Following \cite{AR} we call these
models the $z$-curve and the $A$-curve.  See \cite[Section 24]{AR} or
\cite[Section 4]{congr13} for further details.

We take $p=17$. Let $z_1, \ldots, z_8$ be coordinates on $\PP^7$. We
write $z_0 = 0$, $z_{-i} = -z_{i}$ and agree to read all subscripts
mod $17$. According to \cite[Section~2]{7and11} the $z$-curve for
$X(17)$ is the curve in $\PP^7$ defined by the 4 by 4 Pfaffians of the
17 by 17 skew symmetric matrix $(z_{i-j} z_{i+j})$. We define maps
$\phi_i : X(17) \to \PP^8$ for $i=1,2$ by
\[ \phi_1 = 
\left( 1 : \frac{z_2}{z_1} : \frac{z_6}{z_3} : \frac{-z_1}{z_8} : \frac{-z_3}{z_7} : 
\frac{z_8}{z_4} : \frac{-z_7}{z_5} : \frac{z_4}{z_2} : \frac{-z_5}{z_6} \right) \]
and
\begin{align*} \phi_2 &= \bigg(
    z_1 z_4 + z_2 z_8 + z_3 z_5 - z_6 z_7 - \frac{ 2 z_4 z_7 z_2}{z_1} \\ & :
    -z_8^2 - \frac{2 z_5 z_7 z_2}{z_1} :
    z_7^2 - \frac{2 z_2 z_4 z_6}{z_3} : 
    -z_4^2 + \frac{2 z_5 z_6 z_1}{z_8} : 
    z_5^2 + \frac{2 z_1 z_2 z_3}{z_7} \\ & :
    -z_2^2 + \frac{2 z_3 z_6 z_8}{z_4} :
    z_6^2 - \frac{2 z_1 z_8 z_7}{z_5} :
    -z_1^2 - \frac{2 z_3 z_7 z_4}{z_2} :
    z_3^2 + \frac{2 z_4 z_8 z_5}{z_6} \bigg).
\end{align*}

Let $C_1$ and $C_2$ be the images of $\phi_1$ and $\phi_2$.  We find
using Magma \cite{Magma} that $Q$, the partial derivatives of $F$, and
the quartics $x_0^4 + x_1 x_3 x_5 x_7$ and $x_0^4 + x_2 x_4 x_6 x_8$
vanish on $C_1$. Likewise, $Q$, $D$, and the partial derivatives of
$F$ vanish on $C_2$. These equations are sufficient to define each
curve set-theoretically. In fact, the homogeneous ideal of $C_1$ is
generated by one quadratic form, $9$ cubic forms and $117$ quartic
forms, and the homogeneous ideal of $C_2$ is generated by one
quadratic form and $28$ cubic forms.  To prove the decomposition $C =
C_1 \cup C_2$ we checked that $(x_0^4 + x_1 x_3 x_5 x_7)D^2$ and
$(x_0^4 + x_2 x_4 x_6 x_8)D^2$ belong to the ideal generated by $Q$
and the partial derivatives of $F$.

Since $Q$ and $F$ are invariants, the group $G$ acts on $C$, and hence
on $C_1$ and $C_2$. It is shown in \cite[Lemma 20.40]{AR} that for $p
\ge 7$ a prime, the curve $X(p)$ has automorphism group
$\PSL_2(\Z/p\Z)$. So up to an automorphism of $G$, the $G$-actions on
$C_1$ and $C_2$ correspond to the usual action of $\PSL_2(\Z/17\Z)$ on
$X(17)$.  The points on $X(17)$ above $j=0,1728,\infty$ form
$G$-orbits of sizes $816$, $1224$, $144$. All other $G$-orbits have
size $|G|=2448$.  The intersection of $C_1$ with $\{D = 0\}$ has $3
\times 96 = 288$ points counted with multiplicity. Being preserved by
the $G$-action, it must therefore be the set of cusps, each counted
with multiplicity $2$.
\end{proof}

\begin{Remark}
The formula for $\phi_1$ (up to signs and ordering) is that given in
\cite[Section 51]{AR}, and accordingly $C_1$ is the $A$-curve.  The
formula for $\phi_2$ was found by using the $G$-actions to compute a
complement to the image of $S^2{\mathcal L}(\zeta)$ in ${\mathcal
  L}(2\zeta)$, where ${\mathcal L(\delta)}$ denotes the Riemann Roch
space of a divisor $\delta$, and $\zeta \sim 7\lambda$ is the
hyperplane section for the $z$-curve.
\end{Remark}

\begin{Definition}
\label{def:cov}
A {\em covariant} of degree $d$ is a column vector $\vv$ of
polynomials in $\C[x_0,\ldots,x_8]_d$ satisfying $\vv \circ g = g \vv$
for all $g \in G$.
\end{Definition}

Starting from an invariant $I$ of degree $d$ we may construct a
covariant of degree $d-1$ as
\begin{equation}
\label{nabla}
 \nabla_Q I = H(Q)^{-1} \begin{pmatrix} \partial I/ \partial x_0 \\
   \vdots \\ \partial I/ \partial x_8 \end{pmatrix}
\end{equation}
where $H(Q)$ is the 9 by 9 matrix of second partial derivatives of
$Q$.  Going in the other direction, if $\vv$ and $\ww$ are covariants
of degrees $d$ and $e$ then
\begin{equation}
\label{dot}
\vv \cdot \ww := \vv^T H(Q) \ww = \coeff( Q(\vv + t \ww) , t)
\end{equation}
is an invariant of degree $d+e$. If we think of a covariant as a
$G$-equivariant polynomial map $\C^9 \to \C^9$ then the composition of
covariants $\vv$ and $\ww$ of degrees $d$ and $e$ is a covariant $\vv
\circ \ww$ of degree $de$.

We put $\vv_1 = (x_0,\ldots,x_8)^T$, $\vv_2 = \nabla_Q D$, $\vv_3 =
\nabla_Q F$, $\vv_4 = \vv_2 \circ \vv_2$ and $\vv_6 = \vv_3 \circ
\vv_2$. Then $c_4 = \vv_4 \cdot \vv_6$ is an invariant of degree $10$.

\begin{Lemma} 
\label{prop:j}
Let $X = X(17)$ be the curve denoted $C_1$ in
Proposition~\ref{prop:96+168}. Then the $j$-map $X \to \PP^1$ is given
by $j = -2^7 c_4^3/ D^{10}$.
\end{Lemma}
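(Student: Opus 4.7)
The plan is to show that the two $G$-invariant rational functions $j$ and $-2^7 c_4^3/D^{10}$ on $X = C_1 \isom X(17)$ have the same divisor, and then to pin down the proportionality constant. Since $G \isom \PSL_2(\Z/17\Z)$ acts on $X$ with quotient the $j$-line and both $c_4^3$ and $D^{10}$ are $G$-invariants of degree $30$, the ratio $c_4^3/D^{10}$ is a $G$-invariant rational function on $X$, hence lies in $\C(j)$. So it suffices to match divisors of $c_4^3/D^{10}$ and $j$ on $X$, and then fix the scalar.

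For $\operatorname{div}(j)$, use that $j : X(17) \to X(1) = \PP^1$ has degree $|G| = 2448$ and is ramified with indices $3$, $2$, $17$ over $j=0$, $j=1728$, $j=\infty$, giving
\[ \operatorname{div}(j) \;=\; 3 \cdot (j{=}0 \text{ fibre}) \;-\; 17 \cdot (\text{cusps}). \]
For $\operatorname{div}(c_4^3/D^{10})$, Proposition~\ref{prop:96+168} gives $\operatorname{div}(D^{10}) = 20 \cdot (\text{cusps})$. The crucial sub-step is to show that $c_4$ vanishes to order exactly $1$ at each cusp. Granted this, the residual divisor $\operatorname{div}(c_4) - (\text{cusps})$ is effective, $G$-invariant, and has degree $10 \cdot 96 - 144 = 816$; since the $G$-orbits on $X$ have sizes $144$, $816$, $1224$, $2448$, the only possibility is that this residual divisor is the $j{=}0$ fibre itself. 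Hence $\operatorname{div}(c_4) = (\text{cusps}) + (j{=}0 \text{ fibre})$, so $\operatorname{div}(c_4^3/D^{10}) = \operatorname{div}(j)$, and therefore $c_4^3/D^{10} = c \cdot j$ for some $c \in \C^\times$. I would fix $c = -1/128$ by a single numerical check: matching leading coefficients in the $q$-expansion at a cusp (using the classical $q$-expansions of the Klein forms $z_k$ pulled back through $\phi_1$), or equivalently by evaluating both sides at one non-cuspidal point of $X(17)$ lying over a known $j$-value.

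The main obstacle is the cuspidal vanishing claim for $c_4$. This is the only step that is not formal: the rest follows from the ramification of the $j$-map, the orbit structure of $G$, and a degree count, whereas the vanishing of $c_4 = \vv_4 \cdot \vv_6$ at a cusp is not transparent from the invariant-theoretic formulas, since $\vv_4 = \vv_2 \circ \vv_2$ and $\vv_6 = \vv_3 \circ \vv_2$ involve compositions of the covariants $\nabla_Q D$ and $\nabla_Q F$. I would handle this by pulling back via $\phi_1$ to the $z$-curve, where the cusps are explicit points and the coordinates $z_k$ have known $q$-expansions from Klein's work, so that the leading $q$-term of $\phi_1^* c_4$ can be computed and checked to be non-zero of order $1$.
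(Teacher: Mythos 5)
Your overall strategy---show that $j$ and $c_4^3/D^{10}$ have the same divisor on $X$, then fix the scalar by one evaluation---is exactly the paper's, and your bookkeeping (div$(j) = 3\cdot(j{=}0\text{ fibre}) - 17\cdot(\text{cusps})$, $\operatorname{div}(D^{10}|_X)=20\cdot(\text{cusps})$ from Proposition~\ref{prop:96+168}) is right. The one real difference is that you isolate ``$c_4$ vanishes to order exactly $1$ at each cusp'' as a separate hard input to be verified by $q$-expansions of the Klein coordinates, and you call it the main obstacle. It is not needed: run your orbit/degree count on the full divisor $\operatorname{div}(c_4|_X)$ instead of on the residual divisor. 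Once one knows $c_4$ does not vanish identically on $X$, this is an effective divisor of degree $10\times 96=960$ whose multiplicities are constant on $G$-orbits (because $c_4$ is an invariant), and $960=144a+816b+1224c+2448d$ with nonnegative integers forces $(a,b,c,d)=(1,1,0,0)$, i.e.\ $\operatorname{div}(c_4|_X)=(\text{cusps})+(j{=}0\text{ fibre})$ with no $q$-expansion work at all; this is precisely the paper's argument. What does remain to be checked is that $c_4|_X\not\equiv 0$ and the value of the constant, and the paper handles both at once by an exact algebraic evaluation at an explicit point above $j=1728$ (a point fixed by an order-$2$ permutation matrix in $G$, written in terms of $\theta=\sqrt[4]{1-4i}$), where $c_4^3/D^{10}=-1728/2^7$. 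Your alternative of matching leading $q$-coefficients at a cusp, or a numerical evaluation at a point of known $j$, is workable, but a purely numerical match should be accompanied by the remark that the constant is rational (both functions are defined over $\Q$), or replaced by an exact evaluation, to constitute a proof.
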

\begin{proof} The calculation at the end of this proof shows that
  $c_4$ does not vanish identically on $X$. So the intersection of $X$
  with $\{c_4 = 0\}$ is a set of $10 \times 96 = 816 + 144$ points.
  Arguing as in the proof of Proposition~\ref{prop:96+168}, this set
  is the union of the points above $j = 0$ and $j = \infty$. Our
  formula for the $j$-invariant therefore has the correct divisor, and
  so is correct up to scaling.

  Let $u = (\theta^3 - \theta^2 - \theta + 2 i - 1)/4$ where $\theta =
  \sqrt[4]{1-4i}$ and $i = \sqrt{-1}$. Let $\sigma$ be the generator
  for $\Gal(\Q(\theta)/\Q(i))$ given by $\sigma(\theta) = i
  \theta$. The point
  \[ (1 : u:\sigma(u):\sigma^2(u):\sigma^3(u):
  u:\sigma(u):\sigma^2(u):\sigma^3(u) ) \in X \]
  is fixed by a permutation matrix of order 2 in $G$, and so lies
  above $j = 1728$. The function $c_4^3/ D^{10}$ takes the value
  $-1728/2^7$ at this point.
\end{proof}

\section{Computations in the symplectic case}
\label{comp:symp}

Let $X = X(17) \subset \PP^8$ be the curve denoted $C_1$ in
Proposition~\ref{prop:96+168}. By \cite[Lemma 3.2]{congr13} the
surface $Z(17,1)$ is birational to the quotient of $X \times X \subset
\PP^8 \times \PP^8$ by the diagonal action of $G \isom
\PSL_2(\Z/17\Z)$.  We write $x_0, \ldots, x_8$ and $y_0, \ldots, y_8$
for our coordinates on the first and second copies of $\PP^8$.

\begin{Definition}
  A {\em bi-invariant} of degree $(m,n)$ is a polynomial in $x_0,
  \ldots, x_8$ and $y_0,\ldots,y_8$, that is homogeneous of degrees
  $m$ and $n$ in the two sets of variables, and is invariant under the
  diagonal action of $G$.
\end{Definition}

In principle we may find equations for $Z(17,1)$ by computing
generators and relations for the ring of bi-invariants mod $I(X \times
X)$.  In practice we find it is sufficient to compute only some of the
generators and some of the relations.

The calculations that follow rely on showing that certain
bi-invariants vanish identically on $X \times X$. Initially we only
checked that they vanish at many $\F_p$-points for some moderately
large prime $p$.  For a full proof in characteristic $0$ we used the
$G$-action and numerical approximations to verify the conditions
in the following lemma. This is sufficient since if the absolute value of
the norm of an algebraic integer is less than one, then it must be zero.
\begin{Lemma}
\label{lem:23}
Let $I$ be a bihomogeneous form of degree $(m,n)$ with $m,n \le 22$.
If $I$ vanishes at all points $(P,Q) \in X \times X$ with $j(P),j(Q)
\in \{0,1728,\infty\}$ then $I$ vanishes on $X \times X$.
\end{Lemma}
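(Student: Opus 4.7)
The plan is to bootstrap the vanishing by an elementary degree count, applied twice: first to the slice obtained by fixing the $y$ variables at a special point of $X$, then to the slice obtained by fixing the $x$ variables at an arbitrary point.

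First I would fix any $Q \in X$ with $j(Q) \in \{0,1728,\infty\}$ and consider $I_Q(x) := I(x,Q)$, a homogeneous form of degree $m$ in $x_0,\ldots,x_8$. By the hypothesis of the lemma, $I_Q$ vanishes at every $P \in X$ with $j(P) \in \{0,1728,\infty\}$. Since the $j$-map $X \to \PP^1$ has degree $|G| = 2448$, with ramification indices $3$, $2$, $17$ over $j=0,1728,\infty$ respectively, the fibres consist of $816$, $1224$, $144$ distinct points. Thus $I_Q$ is forced to vanish at $816 + 1224 + 144 = 2184$ points of $X$.

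Next I would use that $X$ is embedded in $\PP^8$ as the $A$-curve, whose hyperplane section is $8 \lambda$ with $\deg \lambda = (17^2-1)/24 = 12$, so $\deg X = 96$. A form of degree $m$ on $\PP^8$ either vanishes identically on $X$ or cuts on it a divisor of degree exactly $96m$. For $m \le 22$ we have $96m \le 2112 < 2184$, which rules out the second alternative and forces $I_Q \equiv 0$ on $X$. This means $I(P,Q) = 0$ for every $P \in X$ and every $Q \in X$ with $j(Q) \in \{0,1728,\infty\}$. Repeating the slicing in the second factor, I would fix arbitrary $P \in X$ and set $J_P(y) := I(P,y)$, a form of degree $n \le 22$; by the step just completed, $J_P$ vanishes at all $2184$ points $Q \in X$ with $j(Q) \in \{0,1728,\infty\}$, and the same inequality $96n \le 2112 < 2184$ gives $J_P \equiv 0$ on $X$. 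Hence $I \equiv 0$ on $X \times X$.

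There is essentially no real obstacle here, as the argument is a direct two-step degree count built on inputs already supplied in the excerpt: the orbit sizes $816$, $1224$, $144$ from the proof of Proposition~\ref{prop:96+168}, together with the degree of the $A$-curve from Section~\ref{sec:X17}. The bound $m,n \le 22$ is essentially sharp for this approach, since $96 \cdot 23 = 2208 > 2184$ would no longer exclude a non-zero form of degree $23$ passing through all $2184$ special points on a single factor without vanishing identically on $X$.
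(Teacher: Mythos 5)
Your proof is correct and follows essentially the same route as the paper: the paper's proof is exactly the Bezout-type count based on the inequality $144 + 816 + 1224 > 22 \times 96$, with the two-variable slicing argument delegated to the analogous lemma in the $13$-congruence paper. Your write-up simply makes that slicing explicit, with the correct degree $96$ for the $A$-curve and the correct fibre sizes over $j = 0, 1728, \infty$.
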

\begin{proof}
This follows from Bezout's theorem, using that $144 + 816 + 1224 > 22
\times 96$. The argument is then identical to that used in the proof of
\cite[Lemma 6.2]{congr13}.
\end{proof}

It is easy to compute the dimension of the space of bi-invariants of
any given degree from the character table of $G$. However we need to
work with explicit bases for these spaces.  Let $Q$, $D$ and $F$ be
the invariants of degrees 2, 3 and 4 defined in
Section~\ref{sec:X17}. We define bi-invariants $Q_{ij}$, $D_{ij}$ and
$F_{ij}$ by the rules
\begin{align*}
  Q(\lambda x_0 + \mu y_0, \ldots , \lambda x_8 + \mu y_8)
    & = \lambda^2 Q_{20} + \lambda \mu Q_{11} + \mu^2 Q_{02}, \\ 
  D(\lambda x_0 + \mu y_0, \ldots , \lambda x_8 + \mu y_8)
    & = \lambda^3 D_{30} + \lambda^2 \mu D_{21} + \ldots
  + \mu^3 D_{03}, \\ 
  F(\lambda x_0 + \mu y_0, \ldots , \lambda x_8 + \mu y_8)
    & = \lambda^4 F_{40} + \lambda^3 \mu F_{31} + \ldots + \mu^4 F_{04}.
\end{align*}
Then, writing $H$ for the $9$ by $9$ matrix of second partial
derivatives with respect to $x_0, \ldots,x_8$, we put
\[ D_x = H(Q)^{-1} H(D_{30}), \,\,\, D_y = H(Q)^{-1} H(D_{21}),
   \,\,\, F_{xy} = H(Q)^{-1} H(F_{31}). \]

The space of bi-invariants of degree $(2,2)$ has dimension $4$, with
basis $A_1, \ldots, A_4$ where $A_1 = Q_{20} Q_{02}$, $A_2 =
Q_{11}^2$, $A_3 = F_{22}$, and
\[ \tr(D_x D_y F_{xy}) = -16 A_1 + 8 A_2 - 8 A_4. \]
Each of these bi-invariants is symmetric (under interchanging
the $x$'s and $y$'s), but only the first vanishes identically
on $X \times X$.

The space of bi-invariants of degree $(3,3)$ has dimensional $16$.
Under interchanging the $x$'s and $y$'s, this breaks up as the direct
sum of symmetric and skew-symmetric subspaces of dimensions $14$ and
$2$. The subspace of symmetric bi-invariants has basis $B'_1, \ldots,
B'_{14}$ given by
\begin{align*}
 & Q_{11} A_2, \,\, Q_{11} A_3, \,\, Q_{11} A_4, \,\,
 D_{30} D_{03}, \,\, D_{21} D_{12}, \,\,
 \tr(D_x^3 D_y^3), \,\,
 \tr(D_x^2 D_y D_x D_y^2), \\ & 
 \tr(D_x^2 D_y^2 F_{xy}), \,\,
 \tr(D_x D_y D_x D_y F_{xy}), \,\,
 \tr(D_x D_y F_{xy}^2), \,\,
 \tr(F_{xy}^3), \\
 & Q_{11} A_1, \,\,
 Q_{20} F_{13}  + Q_{02} F_{31}, \,\,
 2 \nabla_Q F(x_0, \ldots, x_8) \cdot \nabla_Q F(y_0, \ldots, y_8),
\end{align*}
where in the final expression (for $B'_{14}$) we use the
notation~\eqref{nabla} and~\eqref{dot}.

We changed our choice of basis for this space of bi-invariants first
so that the bi-invariants themselves have small integer coefficients,
then so that the relations considered below have small integer
coefficients, and finally to facilitate writing down an elliptic
fibration.  To simplify the calculations that follow, we therefore
(with the benefit of hindsight) switch to the basis $B_1, \ldots,
B_{14}$ that is related to the basis $B'_1, \ldots, B'_{14}$ (as
specified in the last paragraph) by the first change of basis matrix
recorded in Appendix~\ref{app:formulae}. In fact we keep the first and
last three basis elements the same, i.e. $B_i = B'_i$ for
$i=1,2,3,12,13,14$.

The subspace of bi-invariants vanishing on $X \times X$ is spanned by
$B_{12},B_{13},B_{14}$.  We write $\I_1 \subset \Q[z_1,\ldots,z_{11}]$
for the ideal generated by all quadratic and cubic forms vanishing on
the image of the map
\[ X \times X \to \PP^{10} \, ; \quad (x_0, \ldots, x_8;y_0, \ldots, y_8) 
\mapsto (B_1 : \ldots : B_{11}). \]
We find that $\I_1$ is minimally generated by $13$ quadratic forms and
$21$ cubic forms. Moreover the subvariety $\Sigma_1 \subset \PP^{10}$
defined by $\I_1$ is a surface of degree $29$.

\begin{Proposition}
\label{prop:Sigma-K3}
The surface $\Sigma_1$ is birational over $\Q$ to the elliptic surface
$\Sigma$ defined by the Weierstrass equation~\eqref{Weqn0} in the
statement of Theorem~\ref{thm1}.
\end{Proposition}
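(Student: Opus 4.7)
My plan is to exhibit an explicit birational map $\Sigma_1 \dashrightarrow \Sigma$ by writing rational functions $T$, $x$, $y$ in the coordinates $z_1,\ldots,z_{11}$ of $\PP^{10}$, checking that they satisfy the Weierstrass equation~\eqref{Weqn0} modulo the ideal $\I_1$, and then verifying that the resulting map has degree one. Since the basis $B_1,\ldots,B_{11}$ was (as the author explicitly states) chosen ``to facilitate writing down an elliptic fibration,'' I expect the formulas for $T$, $x$, $y$ to be of modest size in the $z_i$.

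First I would locate the base parameter $T$ of an elliptic fibration on $\Sigma_1$. A natural ansatz is that $T$ is a ratio of two linear forms in the $z_i$; a candidate can be tested by computing, modulo $\I_1$, the scheme cut out on $\Sigma_1$ by $\{\ell_1 - T \ell_2 = 0\}$ (viewing $T$ as a parameter) and verifying that the generic member is an irreducible curve of arithmetic genus one. With $T$ fixed, I would search for $x$ and $y$ as rational functions in the $z_i$ with coefficients in $\Q(T)$: once one fixes candidate bidegrees for the numerators and denominators, the demand that~\eqref{Weqn0} hold modulo $\I_1$ becomes a system of linear equations over $\Q(T)$. This search can be carried out first modulo several small primes and then lifted by the Chinese remainder theorem.

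To upgrade $(T,x,y)\colon \Sigma_1 \dashrightarrow \Sigma$ from a dominant rational map to a birational equivalence, I would either invert it explicitly, expressing each $z_i$ as a rational function in $T, x, y$ modulo~\eqref{Weqn0}, or else compute the degree of the generic fibre over the base $\PP^1$ of the fibration and check that it consists of a single point. The main obstacle is not any of the verifications, which reduce to linear algebra and Groebner basis computations, but rather the initial discovery of the correct formulas. Since $\I_1$ is minimally generated by $13$ quadrics and $21$ cubics, intermediate Groebner bases can be substantial, and success of the search depends on the formulas being simple enough -- which is precisely why the basis $B_1,\ldots,B_{11}$ was engineered in advance with this application in mind.
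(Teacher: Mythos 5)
Your plan matches the paper's proof in all essentials: the paper takes $T = z_5/z_6$ as the fibration parameter, realises the generic fibre as an intersection of two quadrics in the coordinates $(z_1:z_2:z_3:z_4)$ over $\Q(T)$, reduces that to the Weierstrass form~\eqref{Weqn0} by an explicit linear change of variables, and certifies birationality by recording the explicit inverse map in the accompanying computer file. The only tactical difference is that the paper passes through the quadric-intersection model of the generic fibre before converting to Weierstrass form, rather than solving for $x$ and $y$ directly by linear algebra over $\Q(T)$, but this is the same computation organised slightly differently.
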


\begin{proof}
The rational map $\Sigma_1 \to \PP^3 \times \Aff^1$ given by
\[(z_1 : \ldots : z_{11}) \mapsto 
((z_1 : z_2 : z_3 : z_4),T) = ((z_1 : z_2 : z_3 : z_4), z_5/z_6)\]
has image satisfying
\begin{align*}
  & 2 T z_1^2 + 3 T z_1 z_2 - (5 T - 2) z_1 z_3
        - 3 T^2 z_1 z_4 + T z_2^2 - 4 T z_2 z_3  - 2 T^2 z_2 z_4\\ &
  \qquad  + 2 (2 T - 1) z_3^2 + T (4 T - 1) z_3 z_4 + T^3 z_4^2 =0,  \\
  & (T + 1)^2 z_1^2 + T (2 T + 1) z_1 z_2 - (T + 1)^2 z_1 z_3
       - T (T + 1)^2 z_1 z_4  
    + T^2 z_2^2 \\ & \qquad - 2 T^2 z_2 z_3 - T^3 z_2 z_4 =0.  
\end{align*}
These same equations define a genus one curve in $\PP^3$
over the function field $\Q(T)$. Making the linear change of coordinates
\begin{align*}
u_1 &= T (T + 1) z_1,  & u_3 &= (T + 1) z_3, \\
u_2 &= T (z_1 + z_2 - 2 z_3 - T z_4), &
u_4 &= T (-z_1 + 2 z_3 + T z_4), 
\end{align*}
gives the simplified quadric intersection
\begin{align*}
   u_1 u_2 + u_1 u_3 + (T + 1) u_2^2 - u_3 u_4 &=0, \\ 
   u_1 u_4 + T u_2^2 - T^2 u_2 u_4 - T u_3 u_4 &=0,
\end{align*}
which in turn is isomorphic to the elliptic curve~\eqref{Weqn0} via
\[ x = \frac{-T u_1}{u_4}, \quad y = \frac{T u_1 (u_1 - u_2 - u_4)}{u_2 u_4}. \]

Composing these maps gives a birational map $\Sigma_1 \to \Sigma$.
The inverse map, represented as an explicit $11$-tuple of elements in
the function field $\Q(\Sigma)$, is recorded in the accompanying
computer file \cite{magmafile}.
\end{proof}

Since the rational map $X \times X \to \Sigma$ is defined by symmetric
bi-invariants, it factors via a rational map $\pi : W(17,1) \to
\Sigma$.  We will see below that $\pi$ is birational, thereby proving
the first part of Theorem~\ref{thm1}.

One way to compute equations for the double cover $Z(17,1) \to
W(17,1)$ is to find a skew-symmetric bi-invariant of degree $(3,3)$
that does not vanish identically on $X \times X$, and then write its
square, modulo $I(X \times X)$, as a quadratic form in $B_1, \ldots,
B_{11}$.  We omit the details, since this calculation is superceded by
the calculation of the $j$-maps, which we do next.

We consider the symmetric bi-invariants $\alpha_1 = D_{30} D_{03}$,
$\alpha_2 = D_{12} D_{21}$, $\alpha_3 = D_{21}^3 D_{03} + D_{12}^3
D_{30}$, $\alpha_4 = c_4(x_0, \ldots, x_8) c_4(y_0, \ldots, y_8)$ and
\[ \alpha_5 = c_4(x_0, \ldots, x_8) D_{03}^3 D_{12} +
c_4(y_0, \ldots, y_8) D_{30}^3 D_{21}, \] of degrees $(m,m)$ for $m =
3,3,6,10,11$. Let $S = \Q[u, v, w, z_4, \ldots, z_{11}]$ be the
graded polynomial ring where the variables have weights
$1,2,2,3, \ldots, 3$.  In the accompanying Magma file~\cite{magmafile}
we record $g_1, \ldots, g_5 \in S$ of weighted degrees $3,3,9,15,15$
and $h_1, \ldots, h_5 \in S$ of weighted degrees $0,0,3,5,4$, such
that each of the bi-invariants
\[ g_i(Q_{11},A_3,A_4,B_4, \ldots,B_{11}) 
- h_i(Q_{11},A_3,A_4,B_4, \ldots,B_{11}) \alpha_i \] vanishes on $X
\times X$. We use these expressions to solve for the $\alpha_i$ as
elements of the function field $\Q(\Sigma)$.  Then the polynomials
$f_1(Y) = Y^2 - \alpha_3 Y + \alpha_1 \alpha_2^3$ and $f_2(Y) = Y^2 -
\alpha_5 Y + \alpha_1^3 \alpha_2 \alpha_4$ have roots defined over the
same quadratic extension of $\Q(\Sigma)$.  Let these roots be
$r_1,s_1$ and $r_2,s_2$.  If we order these roots appropriately then
by Lemma~\ref{prop:j}, and the definition of the $\alpha_i$, we have
\[ j_1 + j_2 =  \frac{-2^{7}}{\alpha_1^9}\left(\frac{r_2^3}{r_1} + 
\frac{s_2^3}{s_1}\right) \qquad \text{ and } \qquad j_1 j_2 =
\frac{2^{14} \alpha_4^3}{\alpha_1^{10}}. \]

Let $\widetilde{\Sigma} \to \Sigma$ be the double cover defined by the
requirement that $\disc f_1$, $\disc f_2$, or $(j_1 + j_2)^2 - 4 j_1
j_2$ is a square. Then the product of $j$-maps $X \to \PP^1$ factors
as
\begin{equation}
\label{ratmaps}
X \times X \ra Z(17,1) \ra  \widetilde{\Sigma} \ra
\PP^1 \times \PP^1. 
\end{equation}
The composite corresponds to a Galois extension of function fields,
with Galois group $G \times G$. Since $G \isom \PSL_2(\Z/17\Z)$ is a
simple group, the diagonal subgroup $\Delta_G \subset G \times G$ is a
maximal subgroup.  Therefore one of the last two maps
in~\eqref{ratmaps} is birational.  However if the last map were
birational, then this would mean that in attempting to quotient out by
$\Delta_G$, we had in fact quotiented out by $G \times G$. To exclude
this possibility we may check, for example, that the rational function
$F_{22}/Q_{11}^2$ on $X \times X$ is not $G \times G$-invariant.

In conclusion, $Z(17,1)$ is birational to $\widetilde{\Sigma}$, and
$W(17,1)$ is birational to $\Sigma$. This completes the proof of
Theorem~\ref{thm1} in the symplectic case.

\begin{Remark} We initially hoped that we might compute 
  $W(17,1)$ using the bi-invariants of degree $(2,2)$, without needing
  those of degree $(3,3)$. In hindsight we see that this is not
  possible, since the map $W(17,1) \to \PP^2$ given by $(A_2:A_3:A_4)$
  is generically $5$-to-$1$. This may be seen by eliminating $z_4$
  from the first quadric intersection in the proof of
  Proposition~\ref{prop:Sigma-K3}, and noting that the resulting
  quartic in $z_1,z_2,z_3$ has degree $5$ in $T$.
\end{Remark}

\section{Computations in the anti-symplectic case}
\label{sec:anti}

In Section~\ref{sec:X17} we defined $G$ as the subgroup of $\SL_9(\C)$
generated by certain matrices with entries in $\Q(\zeta)$, where
$\zeta$ is a primitive $17$th root of unity. Replacing each matrix
entry by its image under the  automorphism
$\zeta \mapsto \zeta^3$ of $\Q(\zeta)$ defines an outer automorphism
$g \mapsto \widetilde{g}$ of $G$.

\begin{Definition}
  A {\em skew bi-invariant} of degree $(m,n)$ is a polynomial in $x_0,
  \ldots, x_8$ and $y_0,\ldots,y_8$, that is homogeneous of degrees
  $m$ and $n$ in the two sets of variables, and is invariant under the
  action of $G$ via $g : (x,y) \mapsto (gx,\widetilde{g}y)$.
\end{Definition}

Since the map $g \mapsto \widetilde{\widetilde{g}}$ is an inner
automorphism of $G$ we see that 
if $f$ is a skew bi-invariant, then so too is
\[ f^\dagger(x_0, \ldots, x_8;y_0, \ldots, y_8) := 
  f(y_0, \ldots, y_8;-x_0, -x_2, -x_3, \ldots , -x_8, -x_1). \]
We have $f^{\dagger \dagger} = f$.

The space of skew bi-invariants of degree $(2,2)$ has basis $A_1, A_2,
A_3$ where $A_1 = Q_{20} Q_{02}$ and
\begin{align*}
A_2 &= 4 x_0^2 y_0^2
   + \textstyle{\sum}
     (2 x_0 x_1 y_3 y_4 + 2 x_4 x_5 y_0 y_1 + x_1^2 y_1 y_7 + x_2 x_8 y_1^2
     + x_1 x_2 y_2 y_5 \\ 
   & \hspace{5em} + x_3 x_6 y_1 y_2 + x_1 x_3 y_5 y_8 + x_1 x_6 y_1 y_3
     + \tfrac{1}{2} (x_1 x_5 y_2 y_6 + x_3 x_7 y_1 y_5)), \\
A_3 &= 4 x_0^2 y_0^2
   + \textstyle{\sum} 
     (2 x_0 x_1 y_2 y_8 + 2 x_1 x_3 y_0 y_1 + x_1^2 y_2 y_3 + x_3 x_4 y_1^2
     + x_1 x_2 y_2 y_5 \\
   & \hspace{5em} + x_3 x_6 y_1 y_2 + x_1 x_3 y_5 y_8 + x_1 x_6 y_1 y_3
     + \tfrac{1}{2} (x_1 x_5 y_1 y_5 + x_2 x_6 y_1 y_5)).
\end{align*}
Here $\sum$ denotes the sum over all simultaneous cyclic permutations
of $x_1, \ldots, x_8$ and $y_1, \ldots, y_8$ (fixing $x_0$ and $y_0$).
We have $A_i^\dagger = A_i$ for $i=1,2,3$.

The space of skew bi-invariants of degree $(3,1)$ is $1$-dimensional,
spanned by
\begin{align*}
S_{31} &= -16 x_0^3 y_0 + \textstyle{\sum}((3 x_0 x_1 x_5 
    + 3 x_1^2 x_4) y_0
    + (6 x_0 x_1 x_3 + 6 x_0 x_4 x_5 + 3 x_2^2 x_3  \\ 
   & \hspace{3em} + 3 x_1 x_4^2 + x_5^3
    + 3 x_4 x_6^2 + 6 x_1 x_2 x_7 + 6 x_3 x_5 x_8 + 6 x_6 x_7 x_8) y_1).
\end{align*}
We write $S_{13} = S_{31}^\dagger$ for the corresponding skew
bi-invariant of degree $(1,3)$.

Earlier we wrote $H(f)$ for the $9$ by $9$ matrix of second partial
derivatives of $f$ with respect to $x_0, \ldots, x_{8}$.  We now write
$H = H_{xx}$ and define $H_{xy}, H_{yx}, H_{yy}$ in the analogous way.
We further put $\H_{xx}(f) = H(Q)^{-1} H_{xx}(f)$, $\H_{xy}(f) =
H(Q)^{-1} H_{xy}(f)$ and so on. The following are skew bi-invariants
of degree $(3,3)$.
\begin{align*}
P &= 2 (Q_{20} S_{13} + Q_{02} S_{31}), \\
\Theta_{ij} &= \tr(\H_{xx}(D_{30}) \H_{xy}(A_i) \H_{yy}(D_{03}) \H_{yx}(A_j)), \\
\Psi_{ij} &= \tr(\H_{xx}(S_{31}) \H_{xy}(A_i) \H_{yx}(A_j)), \\
U_i &= \tr(\H_{xy}(P) \H_{yx}(A_i)), \\
V &= \tr(\H_{xy}(\Theta_{12}) \H_{yx}(A_3)).
\end{align*}
The space of skew bi-invariants of degree $(3,3)$ has dimension $15$.
The subspace fixed by the involution $f \mapsto f^\dagger$ has basis 
$B'_1, \ldots, B'_{12}$ given by
\[ D_{30} D_{03}, \,\, P, \,\,
  \Theta_{12}, \,\, \Theta_{13}, \,\, \Theta_{22}, \,\, 
  \Theta_{23}, \,\, \Theta_{33}, \,\,
  U_2, \,\, U_3, \,\, \Psi_{22} + \Psi_{22}^\dagger,
  \Psi_{23} + \Psi_{23}^\dagger, \,\, V.
  \]
  For the calculations that follow we switch (with the benefit of
  hindsight) to the basis $B_1, \ldots, B_{12}$ that is related to the
  basis $B'_1, \ldots, B'_{12}$ by the second change of basis matrix
  recorded in Appendix~\ref{app:formulae}.

The subspace of bi-invariants vanishing on $X \times X$ has basis
$B_{11}, B_{12}$.  We write $\I_3 \subset \Q[z_1,\ldots,z_{10}]$ for
the ideal generated by all quadratic and cubic forms vanishing on the
image of the map
\[ X \times X \to \PP^{9} \, ; \quad (x_0, \ldots, x_8;y_0, \ldots, y_8) 
\mapsto (B_1 : \ldots : B_{10}). \]
We find that $\I_3$ is minimally generated by $14$ quadratic forms and $2$
cubic forms. Moreover the variety $\Sigma_3 \subset \PP^{9}$ defined by 
$\I_3$ is a surface of degree $24$.

\begin{Proposition}
\label{prop:Sigma-K3-anti}
The surface $\Sigma_3$ is birational over $\Q$ to the elliptic surface
$\Sigma$ defined by the Weierstrass equation~\eqref{Weqn0} in the
statement of Theorem~\ref{thm1}.
\end{Proposition}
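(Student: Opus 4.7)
The plan is to follow the same strategy used to prove Proposition~\ref{prop:Sigma-K3}, adapting it to the anti-symplectic situation. Specifically, I would seek a rational map $\Sigma_3 \dashrightarrow \PP^{n} \times \Aff^1$ (for small $n$) whose first factor is given by a linear subsystem of the coordinates $z_1, \ldots, z_{10}$ on $\PP^9$ and whose second factor $T$ is given by a ratio of two of these coordinates. The hope is that, after writing out the image, one obtains a genus one curve over $\Q(T)$ defined by an intersection of two quadrics (or, if we are luckier, a plane cubic), whose Jacobian over $\Q(T)$ is the elliptic curve~\eqref{Weqn0}.

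Concretely, I would first use the explicit generators of $\I_3$ recorded in \cite{magmafile} to parametrise $\Sigma_3$ generically and search for linear forms $L_1,\ldots,L_k$ and a rational function $T$ in the $z_i$ such that eliminating the remaining coordinates yields a nice pencil of curves over $\Q(T)$. Since $\Sigma_3$ has degree $24$ in $\PP^9$ (whereas $\Sigma_1$ had degree $29$ in $\PP^{10}$), one expects the resulting intersection in $\PP^3$ or $\PP^4$ to have bidegree compatible with a genus one fibration over $\PP^1_T$. After some linear change of coordinates, I would expect to obtain a pair of quadrics in $\PP^3$ similar in shape to those appearing in the proof of Proposition~\ref{prop:Sigma-K3}.

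Next, to identify this elliptic fibration with the specific Weierstrass model~\eqref{Weqn0}, I would compute its Jacobian over $\Q(T)$ (for instance via the standard algorithms for genus one normal curves of degree $4$) and match it to~\eqref{Weqn0}. Alternatively, and more robustly, I would exhibit an explicit birational map $\Sigma_3 \to \Sigma$ by solving for $(x,y)$ as rational functions of the $z_i$, then invert to obtain a birational map $\Sigma \to \Sigma_3$ given by an explicit $10$-tuple of elements of $\Q(\Sigma)$. The two maps being mutually inverse can be verified by direct computation, modulo the defining ideal of $\Sigma$ in one direction and modulo $\I_3$ in the other.

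The main obstacle is identifying the correct projection and subsequent change of coordinates, since an arbitrary choice will either fail to resolve the fibration or produce unmanageably large coefficients. I would use the hints supplied by the symplectic case, and in particular by Theorem~\ref{thm1} itself: since $\Sigma_1$ and $\Sigma_3$ are both known a priori to be birational to the same surface $\Sigma$, I would use the known elliptic fibration on $\Sigma$ (with parameter $T$) to look for a matching rational function on $\Sigma_3$, e.g.\ by computing divisors of candidate functions and matching them up using the $G$-action. Once the fibration is in hand, the rest is a routine computer algebra verification, carried out and recorded in~\cite{magmafile}.
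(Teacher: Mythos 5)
Your proposal follows essentially the same route as the paper: the paper projects $\Sigma_3$ to $\PP^3\times\Aff^1$ via $(z_1:z_2:z_3:z_4)$ and $T=z_5/z_6$, obtains an intersection of two quadrics over $\Q(T)$, and after a linear change of coordinates writes down an explicit isomorphism to~\eqref{Weqn0}, with the inverse $10$-tuple recorded in \cite{magmafile}. The only caveat is that your first alternative (merely matching the Jacobian of the degree-$4$ genus one curve with~\eqref{Weqn0}) would not by itself yield birationality of the surfaces, since one needs the fibration to admit a section; your preferred alternative of exhibiting mutually inverse explicit maps, which is exactly what the paper does, removes this issue.
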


\begin{proof}
The rational map $\Sigma_3 \to \PP^3 \times \Aff^1$ given by
\[(z_1 : \ldots : z_{10}) \mapsto 
((z_1 : z_2 : z_3 : z_4),T) = ((z_1 : z_2 : z_3 : z_4), z_5/z_6)\]
has image satisfying
\begin{align*}
 z_1^2 - T z_1 z_2 + T z_1 z_3 + T z_2 z_3 - T z_3^2 +
    T z_3 z_4 = 0, \\
    z_1 z_3 - T z_2 z_3 + 2 (T + 1) z_1 z_4 
    - (T^2 - 1) z_2 z_4 + (T + 1) z_4^2 = 0.
\end{align*}
These same equations define a genus one curve in $\PP^3$
over $\Q(T)$. Making the linear change of coordinates
\begin{align*}
u_1 &= z_1 - T z_2, & u_3 &= (T + 1) z_3, \\
u_2 &= (T + 2) z_1 + z_2 - (T + 1)(z_3 - z_4), 
& u_4 &= (T + 1) z_4 
\end{align*}
gives the simplified quadric intersection
\begin{align*}
u_1^2 + T u_1 u_2 + T u_2 u_3 - T u_1 u_4 &= 0, \\ 
u_1 u_3 + T u_1 u_4 + u_2 u_4 + u_3 u_4 &= 0,
\end{align*}
which in turn is isomorphic to the elliptic curve~\eqref{Weqn0} via
\[ x = \frac{T (u_1 + T u_2)}{u_4}, \quad
y = \frac{T (u_1 + T u_2)^2}{u_1 u_4}. \]

Composing these maps gives a birational map $\Sigma_3 \to \Sigma$.
The inverse map, represented as an explicit $10$-tuple of elements in the
function field $\Q(\Sigma)$, is recorded in the accompanying 
computer file \cite{magmafile}.
\end{proof}

Since the $B_i$ are skew bi-invariants satisfying $B_i^\dagger = B_i$,
the rational map $X \times X \to \Sigma$ factors via a rational map
$\pi : W(17,3) \to \Sigma$.  We will see below that $\pi$ is
birational, thereby proving the first part of Theorem~\ref{thm1}.

For the purpose of computing the $j$-maps we decided to work with the
skew bi-invariants of degree $(2,2)$, alongside those of degree
$(3,3)$. We consider the map from $X \times X$ to the weighted
projective space $\PP(2,2,3, \ldots,3)$ given by
\[ (v,w,z_1, \ldots, z_{10}) = (A_2,A_3,B_1, \ldots, B_{10}). \]
Among the equations defining the image of this map we found the relations
\begin{align*}
 v z_5 z_{10} - w (z_1 z_5 - z_3 z_5 + z_1 z_6 - z_5 z_8 + z_5 z_9) &= 0, \\
 v w^2 z_7 - (z_2 - z_3 - z_5) (z_6 - z_7) z_{10} + z_5 z_8 z_{10} &= 0.
\end{align*}
We used these relations to extend our map $\Sigma \to \PP^9$ to a map
$\Sigma \to \PP(2,2,3, \ldots,3)$.

The space of skew bi-invariants of degree $(3,2)$ has dimension $5$.
We picked one of these skew bi-invariants, not vanishing on $X \times
X$, and called it $T_{32}$. We also put $T_{23} = T_{32}^\dagger$.  We
define skew bi-invariants $\alpha_1 = D_{30} D_{03}$, $\alpha_2 =
S_{31} S_{13}$, $\alpha_3 = T_{32} T_{23}$, $\alpha_4 = D_{03} S_{31}
T_{32} + D_{30} S_{13} T_{23}$, $\alpha_5 = S_{31} T_{23}^2 + S_{13}
T_{32}^2$,
\begin{align*}
  \alpha_6 &= c_4(x_0, \ldots, x_8) c_4(y_0, \ldots, y_8), \\
  \alpha_7 &= D_{03}^3 T_{23} c_4(x_0, \ldots, x_8)
      + D_{30}^3 T_{32} c_4(y_0, \ldots, y_8),
\end{align*}
of degrees $(m,m)$ for $m =3,4,5,6,7,10,12$.  Let $S = \Q[v, w, z_1,
  \ldots, z_{10}]$ be the coordinate ring of $\PP(2,2,3,\ldots,3)$,
i.e., the graded polynomial ring where the variables have these
weights.  In the accompanying Magma file~\cite{magmafile}, we record
$g_1, \ldots, g_7 \in S$ of weighted degrees $3, 6, 8, 8, 12, 18, 18$,
and $h_1, \ldots, h_7 \in S$ of weighted degrees $0, 2, 3, 2, 5, 8,
6$, such that each of the skew bi-invariants
\[ g_i(A_2,A_3,B_1, \ldots,B_{10}) 
-  h_i(A_2,A_3,B_1, \ldots,B_{10}) \alpha_i \]
vanishes on $X \times X$. We use these expressions to solve for
the $\alpha_i$ as elements of the function field $\Q(\Sigma)$. 
Then the polynomials 
\begin{align*}
f_1(Y) &= Y^2 - \alpha_4 Y + \alpha_1 \alpha_2 \alpha_3, \\
f_2(Y) &= Y^2 - \alpha_5 Y + \alpha_2 \alpha_3^2, \\
f_3(Y) &= Y^2 - \alpha_7 Y + \alpha_1^3 \alpha_3 \alpha_6,
\end{align*}
have roots defined over the same quadratic extension of $\Q(\Sigma)$.
Let $f_i$ have roots $r_i,s_i$.  If we order these roots appropriately
then by Lemma~\ref{prop:j}, and the definition of the $\alpha_i$, we
have
\[ j_1 + j_2 = \frac{-2^{7} \alpha_{2}}{\alpha_{1}^9}
 \left( \frac{r_3^3}{r_1 r_2} +\frac{s_3^3}{s_1 s_2} \right) 
\quad \text{ and } \quad
j_1 j_2 = \frac{2^{14} \alpha_{6}^3}{\alpha_1^{10}}. \]

Let $\widetilde{\Sigma} \to \Sigma$ be the double cover defined by the
requirement that $\disc f_1$, $\disc f_2$, $\disc f_3$ or $(j_1 +
j_2)^2 - 4 j_1 j_2$ is a square. Then the product of $j$-maps $X \to
\PP^1$ factors as
\begin{equation*}
X \times X \ra Z(17,3) \ra  \widetilde{\Sigma} \ra
\PP^1 \times \PP^1. 
\end{equation*}
Exactly as in Section~\ref{comp:symp} it follows that $Z(17,3)$ is
birational to $\widetilde{\Sigma}$ and $W(17,3)$ is birational to
$\Sigma$.  This completes the proof of Theorem~\ref{thm1} in the
anti-symplectic case.

\begin{Remark} The elliptic $K3$-surface~\eqref{Weqn0}
  admits many different elliptic fibrations. The fibrations we
  initially found on $W(17,1)$ and $W(17,3)$ were different, and it was
  only after we discovered that these surfaces are birational that we
  adjusted the calculations in this section so as to find the same
  elliptic fibration. 
\end{Remark}

\section{Some modular curves}

\begin{table}[ht]
\caption{ Copies of $X_0(m)$ on $Z(17,1)$ and $Z(17,3)$ }
\centering
\begin{tabular}{cll} 
  $m$ & \qquad Formula specifying a curve on (a blow up of)
  $z^2 = F_k(T,x,y)$ \\ \hline
2& $F_1(-2 + \eps,-4 + 8 \eps - 5 \eps^2 + \eps^3 + t \eps^4, 
             4 + O(\eps)) = 2^{18} (8 t+1) \eps^4 + O(\eps^5)$  \\
3& $F_3(-1/2 + \eps, 1/2 - \eps + t \eps^3, 1/4 + O(\eps^2))= -2^{-20} (27 t - 16) \eps^4 + O(\eps^5)$ \\
4& $F_1(1 + \eps,- t \eps^2,-1 + O(\eps))= 2^4 (32 t+1) \eps^2 + O(\eps^3)$ \\
5& $F_3(-2 + \eps, 2 - \eps + t \eps^2, 2 + O(\eps))= -2^{12} 3^4 (t^2 - 11 t - 1) \eps^4 + O(\eps^5)$ \\
6& $F_3(-1 + \eps, 1 - 2 \eps + t \eps^2, 2 \eps + O(\eps^2))= (t^2 - 36 t + 36)\eps^{10}  + O(\eps^{11})$\\
7& $F_3(t\eps^{-2}, t \eps^{-3}, \eps^{-2} + O(\eps^{-1}))= t^{22} (t + 1) (t - 27) \eps^{-48}  + O(\eps^{-47})$\\
8& $F_1(\eps,\eps^2 + \eps^3 + 8 t \eps^4, \eps^2 + 2 \eps^3 + O(\eps^4))= 2^4 (t^2 + 6 t + 1) \eps^{18} + O(\eps^{19})$\\ 
9& $F_1(\eps^{-1}, \eps^{-3} + t\eps^{-1}, O(\eps^{-4}))= (t^2 + 20 t - 8) \eps^{-24} + O(\eps^{-23})$ \\
10 & $F_3((t + 1) \eps^2, t^{-1} (t + 1)^3 \eps^4, t^{-1} (t + 1)^3 (\eps^4 + \eps^5) + O(\eps^6))$ \\ & \qquad
$= t^{-4} (t + 1)^{16} (t^2 + 18 t + 1) \eps^{32} + O(\eps^{33})$ \\
11& $F_3(\eps^2 - \eps^3, \eps^3 + \eps^4 + (t - 1) \eps^5, \eps^3 + 2 \eps^4 + O(\eps^5))$ \\ & \qquad
$= t (t^3 + 20 t^2 + 56 t + 44) \eps^{36} +  O(\eps^{37})$ \\
12& $F_3(\eps^{-1}, -\eps^{-1} - (t + 1), t \eps^{-2} + O(\eps^{-1}))= (t^2 - 14 t + 1) \eps^{-24} + O(\eps^{-23})$ \\
13& $F_1(-t\eps^{-1}, (t + 1)\eps^{-1}, t^{-2}(t + 1)^3 + O(\eps))= t^{16} (t^2 + 12 t - 16) \eps^{-20} + O(\eps^{-19})$ \\
14& $F_3(\eps^{-1}, -t \eps^{-3}, t \eps^{-5} + O(\eps^{-4})) = t^2 (t + 1)^4 (t^4 - 14 t^3 + 19 t^2 - 14 t + 1) \eps^{-30} + O(\eps^{-29})$ \hspace{-5em} \\
15& $F_1(\eps^{-1}, t\eps^{-2}, -t \eps^{-4} + O(\eps^{-3})) = t^2 (t - 1)^2 (t^2 - t - 1) (t^2 + 11 t - 1) \eps^{-24} + O(\eps^{-23})$ \hspace{-5em} \\
16& $F_1(-1 + \eps, t \eps, 1 + O(\eps))= (t^2 - 12 t + 4) \eps^{4} + O(\eps^{5})$ \\
18& $F_1(t, -t, -t (t + 1))= t^{16} (t + 1)^2 (t^2 + 10 t + 1)$ \\
19& $F_1(-1 + 2 \eps, (t + 4) \eps, (t + 4)^2 \eps^2 + O(\eps^3)) =
        -8 (t + 3)(t^3 - 2 t + 2) \eps^{4} + O(\eps^{5})$ \\
20& $F_3(-\eps + t^2 \eps^2, \eps - t \eps^2, \eps - \eps^2 + O(\eps^3))= (t^4 + 8 t^3 - 2 t^2 + 8 t + 1) \eps^{12} + O(\eps^{13})$ \\
21& $F_1(-\eps, t (t + 1) \eps^3, (t + 1)^2 \eps^3 + O(\eps^4))= (t^4 + 6 t^3 - 17 t^2 + 6 t + 1) \eps^{16} + O(\eps^{17})$ \\
24& $F_3(t, 1, 0)= (t + 1)^8 (t^3 + t^2 - 1)^4 (t^4 - 8 t^3 + 2 t^2 + 8 t + 1)$ \\ 
25& $F_1(t, t^2, t^2)= -t^{18} (t + 1)^4 (16 t^2 + 4 t - 1)$ \\
27& $F_3(t, t^2 (t + 1), t^2 (t + 1)^2)=
   -t^{18} (t^2 + 2 t + 2)^4 (t - 1) (11 t^3 + 15 t^2 + 9 t + 1)$ \\
32& $F_1(t, t^2 (t + 1), -t^2 (t^3 + t^2 - 1))$ \\
    & \qquad $= t^{16} (t + 1)^4 (t^2 + t + 1)^4
    (t^4 + 8 t^3 + 12 t^2 + 16 t + 4)$ \\
36& $F_1(t, (t + 1) (t^2 + t + 1), (t + 1)^2 (t^2 + 2 t + 2))$ \\
      & \qquad $= (t + 1)^4 (t^3 + t^2 + 2 t + 1)^2 (t^3 + 2 t^2 + 3 t + 1)^4
	    (4 t^4 + 8 t^3 + 12 t^2 + 8 t + 1)$ \\
49& $F_1(t, t (t^2 - 1), -t (t^2 - 1)^2)=
             t^{20} (t + 1)^4 (t^2 - t - 1)^2 (t^4 + 6 t^3 + 3 t^2 - 18 t - 19)$ 
\\ \hline
\end{tabular} 
\label{table:mc}
\end{table}

Let $m \ge 2$ be an integer coprime to $17$. Then any pair of
$m$-isogenous elliptic curves are $17$-congruent with power $k$, where
$k = 1$ if $m$ is a quadratic residue mod $17$, and $k = 3$
otherwise. There is therefore a copy of the modular curve $X_0(m)$ on
the surface $Z(17,k)$. In Table~\ref{table:mc} we explicitly identify
these curves in all cases where $X_0(m)$ has genus $0$ or $1$.  The
polynomials $F_1$ and $F_3$ are those appearing in the statement of
Theorem~\ref{thm1}, and explicitly recorded in
Appendix~\ref{app:formulae}.

\FloatBarrier

In compiling Table~\ref{table:mc} we used the {\tt SmallModularCurve}
database in Magma \cite{Magma} to check the moduli interpretations.  
For example, the entry with $m = 18$ shows that $Z(17,1)$ contains a
curve isomorphic to $y^2 = t^2 + 10t + 1$. We
parametrise this curve
by putting $t=-T/((T + 2)(T + 3))$, and find, using our expressions
for $j_1 + j_2$ and $j_1 j_2$ as rational functions on $W(17,1)$, that
\[ X^2 -(j_1 + j_2)X + j_1j_2 = \big(X - j_{18}(T)\big)
\big(X - j_{18}(6/T) \big) \] 
\noindent where
\[j_{18}(T) =  \frac{((T+2)^{12} - 8(T+2)^9 + 16(T+2)^3 + 16)^3}
                   {(T + 2)^9 ((T + 2)^3 - 8) ((T + 2)^3 + 1)^2 }
 \]
is the $j$-map on $X_0(18)$.

To find most of these curves it was necessary to blow up the surfaces
in Theorem~\ref{thm1}. In such cases we specify the arguments $T,x,y$
of $F_k$ as power series in $\eps$, given to sufficient precision to
determine a unique solution of~\eqref{Weqn0}.  For example, the entry
with $m=20$ shows that blowing up our model for $Z(17,3)$ above
$(T,x,y) = (0,0,0)$ we found a curve isomorphic to $y^2 = t^4 + 8 t^3
- 2 t^2 + 8 t + 1$. Putting this elliptic curve in Weierstrass form we
find it has Cremona label $20a1$, and in particular is isomorphic to
$X_0(20)$.

\section{Examples and further questions}

We restate Conjecture~\ref{conj1} in the case $p=17$.  As usual we say
a $p$-congruence is trivial if it is explained by an isogeny of degree
coprime to $p$.
\begin{Conjecture} 
\label{conj2}
\begin{enumerate}
\item The only non-trivial pairs of symplectically $17$-congruent
  elliptic curves over $\Q$ are the simultaneous quadratic twists of
  the elliptic curves $E_1$ and $E_2$ 
  (with conductors $279809270$ and $3077901970$) as 
  defined in the introduction.
\item The only non-trivial pairs of anti-symplectically $17$-congruent
  elliptic curves over $\Q$ are the simultaneous quadratic twists of
  the elliptic curves $E_1'$ and $E_2'$ 
  (with conductors $3675$ and $47775$) as 
  defined in the introduction.
\end{enumerate}
\end{Conjecture}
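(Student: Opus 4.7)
\medskip

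\noindent\textbf{Proof proposal for Conjecture~\ref{conj2}.}
The overall strategy is to translate the conjecture into a question about rational points on the explicit surfaces of Theorem~\ref{thm1} and then try to enumerate those points. A non-trivial pair of $17$-congruent elliptic curves over $\Q$ of power $k \in \{1,3\}$, taken up to simultaneous quadratic twist, corresponds to a rational point on $Z(17,k)$ which does \emph{not} lie on any of the curves listed in Table~\ref{table:mc} (which account for pairs explained by an isogeny) nor on the locus of the two exceptional $j$-invariants noted after Lemma~\ref{lem2}. So the plan is: (1) list all rational points on $Z(17,1)$ and $Z(17,3)$; (2) cross off those coming from isogenies or the exceptional $17$-isogeny $j$-invariants; (3) check that what remains corresponds exactly to the known pairs $(E_1,E_2)$ and $(E'_1,E'_2)$ (and the points exchanged by the involution $\iota$).

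To attack (1), I would exploit the elliptic fibration $\Sigma \to \PP^1_T$ supplied by~\eqref{Weqn0}. First I would compute the Mordell--Weil group of $\Sigma$ over $\Q(T)$, together with the geometric Mordell--Weil lattice, the Néron--Severi group, and the singular fibres. These should yield an explicit (finite) list of $(-1)$- and $(-2)$-curves, sections, and reducible-fibre components on $\Sigma$; their pullbacks to $Z(17,k)$ via $z^2 = F_k(T,x,y)$ are the natural loci where extra rational points might accumulate. For each such curve $C \subset \Sigma$, I would pull back the double cover to obtain a curve (or union of curves) on $Z(17,k)$, compute its genus and Jacobian, and apply standard rational-point techniques (two-descent, elliptic Chabauty, Mordell--Weil sieve) to bound $C(\Q)$. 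Table~\ref{table:mc} is in fact a rehearsal of exactly this kind of calculation for those $C$ that are modular curves $X_0(m)$.

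For the bulk of the surface — points not lying on any of the finitely many curves treated above — the plan is to iterate $T$-by-$T$: for each $T \in \Q$ with sufficiently small height, determine $\Sigma_T(\Q)$ (a genus-one calculation), and for each such point $P$ test whether $F_k(T,P) \in \Q^{\times 2}$. This search, combined with the curve-by-curve analysis of the previous paragraph, would provide very strong numerical evidence, and for each curve $C$ on which infinitely many rational points do occur (e.g.\ the copies of $X_0(m)$ of genus zero or one of positive rank), one must verify by a moduli calculation that \emph{every} such point yields an isogenous pair. A complementary modular approach would use Lemma~\ref{lem2} together with level-lowering arguments à la Ribet: fix the residual representation $\overline{\rho}_{17}$ attached to $E_1$ (resp.\ $E'_1$), enumerate the finitely many newforms of predicted level with matching $\overline{\rho}_{17}$, and check each companion for a symplectic (resp.\ anti-symplectic) $17$-congruence.

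The main obstacle is fundamental: $Z(17,1)$ and $Z(17,3)$ are surfaces of general type, for which no method is known to provably enumerate $\Q$-points. Even granting Bombieri--Lang, one would conclude only that the rational points lie on a proper closed subset, with no effective control over which subset. Thus step (1) in full rigour lies beyond current technology; the realistic deliverable of this plan is (a) a proof that outside an explicit finite list of curves on $\Sigma$ the double cover $Z(17,k)$ acquires no new $\Q$-points, and (b) a rigorous determination of the rational points on each of those exceptional curves, leaving the conjecture conditional on the Bombieri--Lang type statement that no further rational curve on $\Sigma$ lifts.
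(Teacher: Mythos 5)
This statement is a conjecture, and the paper offers no proof of it; its stated evidence is a search for rational points of bounded height on the explicit models of $Z(17,1)$ and $Z(17,3)$ (finding none beyond the listed curves and points), together with the Cremona--Freitas verification for all pairs of conductor below $500\,000$. Your proposal is therefore not in conflict with the paper: you correctly diagnose that the surfaces are of general type, that no method exists to provably enumerate their rational points, and that even Bombieri--Lang would give no effective conclusion. Your proposed programme --- curve-by-curve analysis of the low-genus curves on the K3 surface $\Sigma$ and their pullbacks to the double covers, combined with a height-bounded point search and a level-lowering cross-check --- is a reasonable elaboration of exactly the kind of evidence the paper assembles (Table~\ref{table:mc} and Table~\ref{table-pts}).

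One step you pass over too quickly is the translation in your step (1): identifying ``non-trivial pairs of $17$-congruent curves'' with ``rational points on the affine model $z^2 = F_k(T,x,y)$ away from certain curves'' requires a \emph{biregular}, not merely birational, model of $Z(17,k)$, since rational points could in principle hide in the indeterminacy locus of the birational map. The paper is explicit that it has only carried out some of the necessary blow-ups (this is why Conjectures~\ref{conj2} and~\ref{conj3} are stated separately and their equivalence is left to future work). Any rigorous version of your plan must resolve this, and also handle the two exceptional $j$-invariants admitting a rational $17$-isogeny separately, as the paper does after Lemma~\ref{lem2}. With those caveats, your proposal is an honest research plan rather than a proof, which is the most that can be said for the statement as it stands.
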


We make a related conjecture.
\begin{Conjecture}
\label{conj3}
 Let $\widetilde{Z}(17,k)$ be the surface
in $\Aff^4$ with equations
\[  y^2 + (T + 1)(T - 2) x y + T^3 y = x^3 - x^2 \]
and $z^2 = F_k(T,x,y)$ where $F_k$ is as recorded in
Appendix~\ref{app:formulae}.
\begin{enumerate}
\item The only $\Q$-points on $\widetilde{Z}(17,1)$
lie above one of the curves 
\begin{align*} (x,y) &= ( 0, -T^3 ), 
    ( -T, -T^2 - T ),
    ( -T, -T ),
    ( T^2, T^2 ), 
    ( T^2, -T^4 + T^2 ), \\
    & \qquad ( T^3 + T^2, T^4 + 2 T^3 + T^2 ),
    ( T^3 - T, T^4 - T^2 - T ),
\end{align*}
or the curve $T = 0$, or one of the points in Table~\ref{table-pts}.
\item The only $\Q$-points on $\widetilde{Z}(17,3)$
lie above one of the curves 
\begin{align*} (x,y) &=  (-T, -T ),
    ( -T, -T^2 - T ), 
    ( T^2, -T^4 + T^2 ), \\ & \qquad ( T^3 + T^2, -T^5 - T^4 + T^2 ),
\end{align*}
or the curve $T = 0$, or one of the points in Table~\ref{table-pts}.
\end{enumerate}
\end{Conjecture}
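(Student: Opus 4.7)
The plan is to exploit the elliptic K3 fibration $\pi \colon \Sigma \to \PP^1_T$ from Theorem~\ref{thm1}, under which $\widetilde{Z}(17,k)$ sits as a double cover cut out by $z^2 = F_k(T,x,y)$. My first step is to verify that every subvariety listed in the conjecture actually lies on the corresponding surface. For each parametric $(x,y)$-formula the Weierstrass equation is satisfied by construction, so these are sections or multisections of $\pi$; the condition that $F_k(T,x(T),y(T))$ be a square in $\Q(T)$ is then a polynomial identity to be checked symbolically. The modular curves in Table~\ref{table:mc} were already identified in Section~8, and the sporadic points of Table~\ref{table-pts} can be substituted directly into the defining equations.

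Second, I would compute the Mordell--Weil group of the generic fibre of $\pi$ over $\Q(T)$, together with the subgroup of its sections that lift to $\widetilde{Z}(17,k)$. A section $P(T) = (x_P(T), y_P(T))$ lifts if and only if $F_k(T, x_P, y_P)$ is a square in $\Q(T)$, so this is a finite check, and one expects the listed parametric curves to exhaust the sections that produce infinite families of rational points.

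For rational points not lying on any horizontal curve, the coordinate $T$ takes a value $T_0 \in \Q$, and the fibre $C_{T_0}$ is generically a smooth curve of genus $\geq 2$, namely the double cover of $E_{T_0}$ by $z^2 = F_k(T_0,x,y)$. Faltings then guarantees finitely many rational points on each such fibre. A systematic attack would combine: (i) elliptic Chabauty or a covering-collection argument on each fibre whose Jacobian over $\Q$ has rank strictly less than its genus; (ii) a Mordell--Weil sieve on $\Sigma$ using reduction modulo several small primes together with the explicit Mordell--Weil structure computed in step two; and (iii) an exhaustive search for sporadic points of bounded height, which one expects to reproduce exactly the entries of Table~\ref{table-pts}.

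The main obstacle is making step (i) uniform in $T_0$. Since $\widetilde{Z}(17,k)$ is of general type with geometric genus $10$, Bombieri--Lang predicts that the rational points lie on a proper Zariski closed subset, but this is unproven and no current technique produces such a statement unconditionally. As $T_0$ varies over $\Q$ one meets infinitely many genus $\geq 2$ fibres, and the Chabauty rank condition will fail on infinitely many of them; for those only the Mordell--Weil sieve can help, and closing the sieve over infinitely many bad fibres appears to require either a new surface-level Diophantine method or a Lawrence--Venkatesh style argument across the K3 fibration. I therefore do not expect an unconditional proof of Conjecture~\ref{conj3} to be within reach of current technology, and the realistic deliverable of such a project is strong numerical evidence together with a proof of the finiteness of rational points on each explicitly exceptional fibre.
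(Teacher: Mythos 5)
This statement is a conjecture, and the paper does not prove it: the authors' only stated evidence is a point search with $H(T)\le 3000$ and $H(x)\le 10000$, together with the blow-ups performed while compiling Table~\ref{table:mc}, and they explicitly write that they ``see little hope of proving this conjecture using existing methods.'' Your assessment is therefore accurate and matches the paper's own position; the verification that the listed curves and the points of Table~\ref{table-pts} lie on $\widetilde{Z}(17,k)$ is, as you say, a direct symbolic check, and the bounded-height search is exactly the evidence the authors supply. Your further suggestions (Mordell--Weil of the generic fibre, fibre-wise Chabauty, a Mordell--Weil sieve over the K3 fibration) go beyond what the paper attempts, but as you correctly observe they cannot be made uniform in $T_0$, since Faltings gives finiteness only fibre by fibre and infinitely many fibres of genus $\ge 2$ occur; so they do not close the gap either. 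One caveat: your claim that the listed parametric curves should ``exhaust the sections that produce infinite families of rational points'' is itself open --- the paper poses the determination of all genus $0$ or $1$ curves on these surfaces as a separate unresolved problem --- so even that preliminary step is not a finite check in the form you describe.
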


The points on $\widetilde{Z}(17,k)$ lying above one of the curves
listed in Conjecture~\ref{conj3} do not correspond to non-trivial
pairs of $17$-congruent elliptic curves, either because there is an
$m$-isogeny (with $m=18$ or $25$), or the $j$-maps have a pole,
or the point is spurious since $F_k$ vanishes to even multiplicity.

It should be possible to prove that Conjectures~\ref{conj2}
and~\ref{conj3} are equivalent by computing biregular (not just
birational) models for $Z(17,1)$ and $Z(17,3)$. This belief is based
on the fact that in compiling Table~\ref{table:mc} we carried out some
of the necessary blow ups, but did not find any new examples of
$17$-congruences. Nonetheless we leave the full verification to future
work.

The height of a rational number $x = a/b$ (where $a,b$ are coprime
integers) is $H(x) = \max(|a|,|b|)$.  Our evidence for
Conjecture~\ref{conj3} is that we found no further points with $H(T)
\le 3000$ and $H(x) \le 10000$.  We see little hope of proving this
conjecture using existing methods.  A possibly more tractable problem,
the answer to which would still be interesting, would be to determine
all curves of genus 0 or 1 on these surfaces (either over $\Q$ or over
$\Qbar$).

In Table~\ref{table-pts} we list elliptic curves via their Cremona
labels \cite{Cr}, writing instead the conductor followed by a star for
curves beyond the range of Cremona's tables. The latter convention is
only needed for the first entry, where the relevant elliptic curves
are the ones defined in the introduction. The first column records
whether the congruence is symplectic ($k=1$) or anti-symplectic
($k=3$).  The final column records the degree of the isogeny when the
curves are isogenous. Each pair of elliptic curves we list is only
determined up to simultaneous quadratic twist.

\begin{table}[b]
\caption{Some rational points on $Z(17,1)$ and $Z(17,3)$}
\label{table-pts}
$\begin{array}{|c|ccc|c|c|} \hline
k & T & x & y & \text{17-congruent ell. curves} 
& \text{degree} \\ \hline
1 & 1/3 & -2/75 & -11/125 &   {\tt 279809270*}, {\tt 3077901970*}  & - \\
1 & -3 & -27 & 108 &          {\tt 1849a1}, {\tt 1849a2} & 43 \\ 
1 & -5/6 & -5/24 & 5/16 &     {\tt 4489a1}, {\tt 4489a2} & 67 \\ \hline
3 & 1 & 2 & 4 &                 {\tt 27a2}, {\tt 27a4} & 27 \\
3 & 9/7 & 27/49 & -54/49 &       {\tt 3675b1}, {\tt 47775b1} & - \\
3 & -5/14 & 125/392 & 375/1568 & {\tt 1225h1}, {\tt 1225h2} & 37 \\ 
3 & 11/39 & 1771/6591 & 116380/257049 & {\tt 26569a1}, {\tt 26569a2} & 163 
\\ \hline
\end{array}$ 
\end{table}

\FloatBarrier

\newpage

\appendix

\section{Formulae}
\label{app:formulae}

We record the polynomials $F_1(T,x,y)$ and $F_3(T,x,y)$ in
Theorem~\ref{thm1}. These define the double covers $Z(17,1) \to
W(17,1)$ and $Z(17,3) \to W(17,3)$.

\small

\begin{align*}
 F_1(&T,x,y) = x^{10} - 2 T (T - 1) x^8 y + T (T^3 - 2 T^2 - 11 T + 4) x^9 \\
 & - T^2 (T^4 - 3 T^3 - 3 T^2 + 3 T - 10) x^7 y + T^2 (8 T^4 + 58 T^3
   + 15 T^2 - 64 T + 5) x^8 \\ & - T^3 (8 T^5 + 51 T^4 + 80 T^3 + 51 T^2 -
   2 T - 20) x^6 y - 2 T^4 (16 T^4 - 41 T^3 - 205 T^2 \\ & - 97 T + 71) x^7
   + 2 T^4 (16 T^6 + 49 T^5 - 13 T^4 - 142 T^3 - 113 T^2 - T +
   10) x^5 y \\ & - T^4 (149 T^6 + 576 T^5 + 180 T^4 - 956 T^3 - 579 T^2 +
   148 T + 5) x^6 - 2 T^5 (13 T^7 \\ & - 39 T^6 - 229 T^5 - 202 T^4 +
   126 T^3 + 169 T^2 + 9 T - 5) x^4 y + T^5 (80 T^8 + 318 T^7 \\ & -
   192 T^6 - 1800 T^5 - 1376 T^4 + 819 T^3 + 750 T^2 - 67 T - 4) x^5 -
   T^6 (T + 1) (24 T^7 \\ & + 156 T^6 - 24 T^5 - 558 T^4 - 285 T^3 +
   192 T^2 + 21 T - 2) x^3 y - T^6 (16 T^{10} + 72 T^9 \\ & - 239 T^8 -
   1300 T^7 - 870 T^6 + 1952 T^5 + 2295 T^4 + 18 T^3 - 449 T^2 + 4 T +
   1) x^4 \\ & + T^8 (T + 1)^2 (12 T^6 - 50 T^5 - 226 T^4 + 36 T^3 +
   292 T^2 - 31 T - 6) x^2 y \\ & - T^8 (T + 1) (76 T^8 + 273 T^7 - 275 T^6
   - 1505 T^5 - 631 T^4 + 1016 T^3 + 472 T^2 \\ & - 94 T - 4) x^3 - T^{10} (T
   + 1)^3 (T^5 - 14 T^4 + 55 T^3 + 118 T^2 - 68 T - 4) x y \\ & - T^{10} (T +
   1)^2 (131 T^6 + 328 T^5 - 234 T^4 - 700 T^3 - 18 T^2 + 138 T +
   3) x^2 \\ & - T^{13} (T + 1)^4 (T^2 - 2 T + 28) y - 2 T^{13} (T +
   1)^3 (49 T^3 + 63 T^2 - 63 T - 27) x \\ & - 27 T^{16} (T + 1)^4.
\end{align*}

\begin{align*}
 F_3(&T,x,y) =
   x^{10} - 14 T x^8 y - T (4 T^2 - 71 T - 16) x^9 +
   T^2 (17 T^2 - 89 T - 18) x^7 y \\ & - T^2 (14 T^4 + 288 T^3 + 165 T^2 -
   220 T - 19) x^8 + T^3 (76 T^4 + 480 T^3 + 545 T^2 \\ & - 176 T -
   4) x^6 y + T^3 (94 T^6 + 513 T^5 + 234 T^4 - 1412 T^3 - 732 T^2 +
   242 T + 4) x^7 \\ & - T^5 (163 T^5 + 837 T^4 + 1320 T^3 - 72 T^2 - 898 T
   + 106) x^5 y - T^5 (159 T^7 + 590 T^6 \\ & - 103 T^5 - 3276 T^4 -
   3150 T^3 + 1326 T^2 + 820 T - 112) x^6 + T^6 (80 T^7 + 418 T^6 \\ & +
   501 T^5 - 936 T^4 - 2496 T^3 - 948 T^2 + 390 T - 4) x^4 y +
   T^6 (98 T^9 + 386 T^8 \\ & - 350 T^7 - 3439 T^6 - 3894 T^5 + 3010 T^4 +
   4872 T^3 - 306 T^2 - 284 T + 4) x^5 \\ & + T^8 (4 T^8 + 130 T^7 +
   917 T^6 + 2787 T^5 + 4078 T^4 + 2292 T^3 - 482 T^2 - 480 T \\ & -
   60) x^3 y - T^8 (27 T^{10} + 122 T^9 - 177 T^8 - 1496 T^7 - 987 T^6 +
   5032 T^5 + 8446 T^4 \\ & + 1124 T^3 - 2621 T^2 + 54 T - 61) x^4 -
   T^{10} (10 T^9 + 132 T^8 + 738 T^7 + 2126 T^6 \\
   & + 3179 T^5 + 1902 T^4 -
   718 T^3 - 1376 T^2 - 482 T - 256) x^2 y + T^{10} (T^{10} - 48 T^9
\end{align*}

\begin{align*}
   & - 122 T^8 + 882 T^7 + 4304 T^6 + 6244 T^5 + 973 T^4 - 4506 T^3 -
  1714 T^2 + 428 T \\ & - 242) x^3 - T^{12} (T^{10} - T^9 - 70 T^8 -
  368 T^7 - 814 T^6 - 714 T^5 + 237 T^4 + 963 T^3 \\ & + 800 T^2 + 522
  T + 312) x y - T^{12} (26 T^9 + 283 T^8 + 1018 T^7 + 1256 T^6 - 810
  T^5 \\ & - 3237 T^4 - 1848 T^3 + 648 T^2 + 108 T - 265) x^2 - T^{14}
  (T + 2) (T^8 + 12 T^7 + 44 T^6 \\ & + 74 T^5 + 64 T^4 + 20 T^3 - 43
  T^2 - 92 T - 60) y - 2 T^{14} (10 T^8 + 94 T^7 + 323 T^6 \\ & + 471
  T^5 + 129 T^4 - 367 T^3 - 263 T^2 + 69 T + 36) x + T^{16} (T^8 + 4
  T^7 - 8 T^6 \\ & - 66 T^5 - 120 T^4 - 56 T^3 + 53 T^2 + 36 T - 16).
\end{align*}

\normalsize

We also record the change of basis matrices we used in
Sections~\ref{comp:symp} and~\ref{sec:anti}. In each case the rows of
the matrix give the $B_i'$ in terms of the $B_i$.
  
\tiny

\[ \left( \begin{array}{cccccccccccccc}
  1 & 0 & 0 & 0 & 0 & 0 & 0 & 0 & 0 & 0 & 0 & 0 & 0 & 0 \\
  0 & 1 & 0 & 0 & 0 & 0 & 0 & 0 & 0 & 0 & 0 & 0 & 0 & 0 \\
  0 & 0 & 1 & 0 & 0 & 0 & 0 & 0 & 0 & 0 & 0 & 0 & 0 & 0 \\
 -24 & -12 & 28 & 8 & -4 & 0 & 4 & 4 & 0 & -4 & 4 & 0 & 0 & 0 \\
 -34 & -15 & 34 & 7 & -8 & -3 & 0 & 5 & -2 & -9 & 7 & 0 & -1 & 0 \\
 -480 & -288 & 320 & 160 & -64 & -32 & 192 & 160 & 0 & -96 & 96 & 512 & -96 & 0 \\
 248 & 144 & -96 & -144 & 0 & -16 & -192 & -144 & -32 & 48 & 16 & 448 & -48 & 16 \\
 -24 & -56 & -128 & -40 & 0 & -24 & 32 & 56 & -16 & -24 & -8 & -32 & -40 & 8 \\
 -384 & -144 & 688 & 96 & -64 & 0 & 32 & -48 & -16 & 0 & 128 & -64 & 0 & 8 \\
 328 & 204 & -240 & -76 & 32 & 4 & -144 & -92 & 8 & 36 & -20 & 24 & 20 & -12 \\
 -366 & -180 & 96 & 72 & -48 & 12 & 192 & 168 & -12 & -48 & -84 & 36 & 0 & 36 \\
 0 & 0 & 0 & 0 & 0 & 0 & 0 & 0 & 0 & 0 & 0 & 1 & 0 & 0 \\
 0 & 0 & 0 & 0 & 0 & 0 & 0 & 0 & 0 & 0 & 0 & 0 & 1 & 0 \\
 0 & 0 & 0 & 0 & 0 & 0 & 0 & 0 & 0 & 0 & 0 & 0 & 0 & 1 
\end{array} \right) \]

\[ \left( \begin{array}{cccccccccccc}
 -4 & -12 & 8 & -4 & -8 & -8 & 0 & -4 & 0 & 0 & 0 & 0 \\
 0 & 0 & 0 & 0 & 0 & 0 & 0 & 0 & 0 & 0 & 4 & 0 \\
 -56 & -176 & 148 & -68 & -160 & -140 & 36 & -52 & 24 & 24 & -8 & 4 \\
 -104 & -240 & 156 & -76 & -128 & -164 & 12 & -92 & 8 & 8 & 8 & 4 \\
 -16 & -128 & 124 & -36 & -144 & -92 & 12 & -36 & 8 & 8 & 8 & 12 \\
 -40 & -48 & 48 & -24 & -32 & -40 & 48 & -24 & 32 & 32 & 0 & 16 \\
 -64 & -176 & 132 & -60 & -128 & -148 & 4 & -76 & -8 & -8 & 24 & -12 \\
 -36 & -72 & 96 & -96 & 12 & 24 & -36 & -12 & 0 & 24 & 24 & -12 \\
 -60 & -48 & 48 & -24 & -12 & -48 & 60 & -60 & 48 & 48 & 72 & 24 \\
 36 & 72 & -72 & 24 & 60 & 24 & -60 & 12 & -72 & -72 & 24 & -48 \\
 -72 & -84 & 108 & -48 & 0 & 24 & -12 & -24 & 12 & 24 & 0 & 12 \\
 -208 & 204 & -72 & 28 & 364 & 116 & 288 & -44 & 208 & 224 & 32 & 104  
\end{array} \right) \]

\normalsize

\end{document}